%%%%%%%%%%%%%%%%%%%%%%%%%%%%%%%%%%%%%%%%

\documentclass[12pt]{article}
\usepackage{amsmath}
\usepackage{amsfonts}
\usepackage{amssymb}
\usepackage{amsthm}
\usepackage{mathrsfs}
\usepackage{amsxtra}
\usepackage[all,cmtip]{xy}
\usepackage{graphicx}
\usepackage{color}
\usepackage{comment}
\usepackage{verbatim}
\usepackage{url}

\newtheorem{thm}{Theorem}[section]

\newtheorem{lem}[thm]{Lemma}
\newtheorem{prop}[thm]{Proposition}

\newtheorem{cor}[thm]{Corollary}

\theoremstyle{definition}
\newtheorem{defn}[thm]{Definition}
\newtheorem{rem}[thm]{Remark}

\def\XXint#1#2#3{{\setbox0=\hbox{$#1{#2#3}{\int}$}
\vcenter{\hbox{$#2#3$}}\kern-.5\wd0}}

\def\bA{\mathbb{A}}
\def\cA{{\cal A}}

\def\cB{{\cal B}}
\def\cC{{\cal C}}

\def\cK{{\cal K}}

\def\cS{{\cal S}}

\def\cP{\mathcal{P}}

\def\cL{{\cal L}}
\def\cM{{\cal M}}
\def\cO{\mathcal{O}}

\def\cX{{\cal X}}
\def\Z{{\Bbb Z}}
\def\R{{\Bbb R}}

\def\C{{\Bbb C}}
\def\N{{\Bbb N}}
\def\Q{{\Bbb Q}}
\def\F{{\Bbb F}}

\def\Hom{\text{Hom}}

\def\det{\text{det}\,}

\def\GL{\operatorname{GL}}
\def\SL{\operatorname{SL}}

\def\Real{\operatorname{Re}}

\def\GL{\operatorname{GL}}

\def\sign{\operatorname{sign}}

\def\br{\mathbf{r}}

\def\F{{\Bbb F}}

\def\cP{\mathcal{P}}
\def\Norm{\operatorname{N}}

\def\Coeff{\operatorname{Coeff}}

\def\Frac{\operatorname{Frac}}

\topmargin -0.4in
\textwidth 6.5in
\textheight 9 in

\begin{document}
\hoffset=-.4in
\voffset=-0.05in

%\small

 \title{The $p$-adic Shintani cocycle}
\author{G. Ander Steele}
\date{September 21, 2012}
\maketitle

\abstract{The Shintani cocycle on $\GL_n(\Q)$, as constructed by Hill, gives a cohomological interpretation of special values of zeta functions for totally real fields of degree $n$. We give an explicit criterion for a specialization of the Shintani cocycle to be $p$-adically interpolable. As a corollary, we recover the results of Deligne-Ribet,  Cassou Nogu\`es and Barsky on the construction of $p$-adic $L$-functions attached to totally real fields.}
\tableofcontents
\section{Introduction}
The goal of this paper is to construct cocycles on arithmetic subgroups of $\GL_n$, valued in spaces of $p$-adic pseudo-measures, which specialize to $p$-adic $L$-functions. Our starting point will be the \emph{Shintani zeta functions}, which generalize the Hurwitz partial zeta functions
\begin{equation*}
	\zeta_H([a+f\Z];s):=\sum_{\substack{n\in\N\\ n\equiv a~(f)}} \frac{1}{n^s}
\end{equation*}
to several dimensions. Given a lattice $L\subset\R^n$ (more generally, a finite linear combination of characteristic functions of affine lattices, e.g. a test function) and a simplicial cone $C\in\R^n_+$, the zeta function is defined by
\begin{equation*}
	\zeta_{SH}([L],C;s):=\sum_{v\in C} \frac{1}{N(v)^s}, ~~\Re(s)\gg0,
\end{equation*}
where $N(v)$ is the product of the coordinates. Shintani \cite{Shi76} showed these enjoy meromorphic continuation to the entire complex plane and computed their special values in terms of generalized Bernoulli polynomials. Moreover, Shintani showed how to decompose the $L$-functions of totally real fields into Shintani zeta functions. The explicit formulas for these values implies the rationality results of Siegel and Klingen provides the foundation for $p$-adic interpolation of these values by Cassou-Nogu\'es \cite{CN79} and Barsky \cite{Bar}. Our contribution will be a simple criterion, in terms of the cone $C$ and the ``test function $f$", for the special values of $\zeta_{Sh}([L],C;s)$ to be $p$-adically continuous.

Next, we turn to the \emph{Shintani cocycle}, constructed by Hill in \cite{Hi07}. This $n-1$ cocycle on $\GL_n$ takes values in a module of cones. Pairing a cone $C$ and a test function $f$ gives rise to a Shintani zeta function $\zeta_{SH}(f,C;s)$, and this pairing is obviously bilinear and $\GL_n$-equivariant. Our approach will be to fix a test function $f'$, away from $p$, and use this pairing to construct $p$-adic pseudo-measures corresponding to $\zeta_{SH}$. After restricting to a subgroup $\Gamma\subset\GL_n$ stabilizing $f$, we get a cocycle $\Phi_{f'}:\Gamma^n\rightarrow \widetilde{\cM}(\Z_p^n)$ valued in a space of pseudo-measures. We will describe which specializations (if any) yield $p$-adic measures in terms of our criterion. As an application, we will give a new construction of the $p$-adic $L$-functions of totally real fields.

Recently, Charollois and Dasgupta \cite{ChDa} have obtained similar results with Sczech's $\GL_n$ cocycle as part of a program to study the Gross-Stark units. In their work, they define an $\ell$-smoothed Sczech cocycle and deduce integrality results from explicit formulas in terms of Dedekind sums. As a consequence of these integrality results, they construct the $p$-adic measures corresponding to the zeta values of totally real fields. They have announced similar results for a version of the Shintani cocycle, but their techniques are substantially different from ours. Concurrently, Spiess \cite{Spi12} has constructed $p$-adic measures from the Shintani cocycle, adapting the argument of Cassou-Nogu\`es. Again, our techniques differ substantially. Rather than constructing measures from integrality results, we find the $p$-adic pseudo-measures as specializations of the Shintani cocycle, then show that these specializations are in fact measures via our elementary arguments. 
\subsection*{Acknowledgements}
First and foremost, the author wishes to thank his advisor, Glenn Stevens, for his countless contributions to this paper. This work was undertaken as part of the author's Ph.D. dissertation and would not have been possible without the generous support and constant encouragement from his advisor. I thank him for his patience, his wealth of ideas, and his careful reading of many drafts. Additionally, we wish to acknowledge the ancestry of this paper, which has its origins in Stevens's article ``The Eisenstein measure and real quadratic fields," \cite{Ste89}.

This project was motivated, in large part, by the works of S. Dasgupta and H. Darmon (see \cite{DDa06}, \cite{Das07}, \cite{Das08}) on Gross-Stark units. It is a pleasure to acknowledge the influence of their work, and to thank Dasgupta for his encouragement. Finally, I would like to thank J. Pottharst and R. Pollack for conversations which led to improvements in the exposition.

\section{Notation and Definitions}
Let $V$ be a finite dimensional $\Q$-vector space. We will always assume that we have picked a distinguished lattice $L\subset V$. In the case $V=\Q$, we let $L=\Z$.

For each prime $p$, we will write $V_p:=V\otimes_{\Q}\Q_p$, and $L_p:=L\otimes_{\Z}\Z_p$. We write $V_\R:=V\otimes_\Q \R$. If we equip $V_\R$ with a basis $e_1,\ldots,e_n$, then we will denote by $(V_\R)_+$ the positive orthant $\R_+e_1+\cdots \R_+ e_n$, where $\R_+=(0,\infty)$. We will suppress this choice of basis from our notation.

The group of test functions on $V$, denoted $\cS(V)$, is simply the $\Z$-module of Bruhat-Schwartz functions on the finite adeles $\bA_V^{(\infty)}$. Denote by $\cS(V_p)$ the space of step functions (locally constant, compact support) $f_p:V_p\rightarrow \Z$. For example, if $U\subset V_p$ is a compact open, we will write $[U]$ for the characteristic function of $U$. The group of test functions on $\Z$ is defined by
\begin{equation}
	\cS(V)=\bigotimes_{p}{}' \cS(V_p)
\end{equation}
where the restricted product means $f_p=[L_p]$ almost everywhere.

\begin{lem}
Equipping $V$ with the lattice topology, $\cS(V)$ is the space of locally constant functions with bounded support $f:V\rightarrow\Z$. In other words, a test function $f\in\cS(V)$ is a finite linear combination of characteristic functions of affine lattices.
\end{lem}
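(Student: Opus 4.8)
The plan is to work through the diagonal embedding $\iota\colon V\hookrightarrow \bA_V^{(\infty)}=\prod_p{}'V_p$ and to exploit the defining presentation $\cS(V)=\bigotimes_p{}'\cS(V_p)$. Three elementary facts about $\iota$ will be used throughout: its image is dense (this is the Chinese Remainder Theorem, i.e.\ approximation for the additive group); the subspace topology it induces on $V$ is precisely the lattice topology, a neighbourhood basis of $0$ being the sets $V\cap\widehat M$, where $\widehat M:=\prod_p M_p$ (with $M_p:=M\otimes_\Z\Z_p$) runs over the compact open subgroups of $\bA_V^{(\infty)}$ attached to lattices $M\subseteq V$ commensurable with $L$; and $V\cap\widehat M=M$ for every such $M$. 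Since a Bruhat--Schwartz function on $\bA_V^{(\infty)}$ is locally constant, it is determined by its restriction to the dense set $\iota(V)$, so restriction along $\iota$ realizes $\cS(V)$ as a space of $\Z$-valued functions on $V$; the task is to identify this space.

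I would first show $\cS(V)\subseteq\Span_\Z\{[a+N]:a\in V,\ N\subseteq V\text{ a lattice}\}$. An element of $\cS(V)$ is a finite $\Z$-combination of pure tensors $\bigotimes_p f_p$ with $f_p=[L_p]$ for almost all $p$; writing each $f_p$ as a $\Z$-combination of characteristic functions of cosets of sublattices of $V_p$ and expanding, we reduce to the characteristic function of a ``box'' $B=\prod_p(a_p+M_p)$ with $a_p+M_p=L_p$ for almost all $p$. Choose one lattice $N\subseteq V$ with $N_p\subseteq M_p$ for all $p$ --- possible since the constraint is non-trivial at only finitely many primes. Then $B$ is a finite disjoint union of cosets of $\widehat N$, and each such coset, being a non-empty open subset of $\bA_V^{(\infty)}$, meets $\iota(V)$; if $\iota(v)$ lies in it, then $\iota(V)\cap(\iota(v)+\widehat N)=\iota(v+N)$. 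Hence $B\cap\iota(V)$ is a finite disjoint union of affine lattices, so $[B]$ restricts to an element of the span. For the reverse inclusion, an affine lattice $a+N\subseteq V$ equals $\iota(V)\cap\prod_p(a+N_p)$, and $\bigotimes_p[a+N_p]$ is a pure tensor in $\bigotimes_p{}'\cS(V_p)$ (because $a+N_p=L_p$ for almost all $p$) restricting to $[a+N]$. Thus $\cS(V)=\Span_\Z\{[a+N]\}$ as function spaces on $V$.

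It remains to match this with the statement. Each $[a+N]$ is locally constant --- both $a+N$ and its complement are unions of cosets of the open lattice $N$ --- and has support inside the lattice spanned by $a$ and $N$, so the span consists of locally constant functions of bounded support. Conversely, if $f\colon V\to\Z$ is locally constant with $\supp f$ contained in a lattice $M$, the crux is to produce a single finite-index sublattice $N\le M$ on whose cosets $f$ is constant; equivalently, $f|_M$ should extend to a locally constant function on the compact group $\widehat M$. Granting this, $f=\sum_{\bar a\in M/N}f(a)\,[a+N]$ lies in the span. I expect this passage from pointwise to uniform local constancy to be the only real obstacle --- it is precisely here that boundedness of the support, compactness of $\widehat M$, and the convention that ``locally constant'' is taken in the uniform (Bruhat--Schwartz) sense enter --- whereas everything else is bookkeeping with the restricted tensor product and the density of $\iota(V)$.
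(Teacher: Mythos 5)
The paper states this lemma without proof, so there is no argument of the author's to compare against; your proposal supplies one, and its core --- restriction along the dense diagonal embedding $\iota$, reduction of a pure tensor to boxes $\prod_p(a_p+M_p)$, choice of a single global lattice $N$ with $N_p\subseteq M_p$ for all $p$, and the identities $V\cap\widehat N=N$ and $\iota(V)\cap(\iota(v)+\widehat N)=\iota(v+N)$ --- is correct and establishes the substantive identification $\cS(V)=\Span_\Z\{[a+N]\}$, which is the form of the lemma actually used in the paper.

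On the one step you leave open (``Granting this\dots''): your instinct that all the remaining content sits in the passage from pointwise to uniform local constancy is right, but you should be aware that this passage is not a provable step --- under the pointwise reading the lemma is simply false, so the uniform convention is forced. For $V=\Q$, $L=\Z$: fix $\alpha\in\widehat\Z\setminus\Z$, partition $\widehat\Z\setminus\{\alpha\}$ into the clopen shells $S_k=\{x:|x-\alpha|=\varepsilon_k\}$ for a decreasing sequence $\varepsilon_k\to 0$, and set $A=\Z\cap\bigcup_{k\ \mathrm{even}}S_k$. Each point of $\Q$ has a lattice-topology neighbourhood on which $[A]$ is constant, and the support lies in $\Z$; but if $[A]$ were a finite combination $\sum c_i[a_i+N_i]$ it would be constant on cosets of $\bigcap_i N_i$, so $A$ would be cut out by a clopen subset $U'\subseteq\widehat\Z$, and density of $\Z$ forces $U'$ to be $\bigcup_{k\ \mathrm{even}}S_k$ or that set with $\alpha$ adjoined, neither of which is clopen. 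In particular, compactness of $\widehat M$ cannot rescue the pointwise reading, because $f$ is only given on the dense non-compact subset $M\subseteq\widehat M$ and need not extend continuously. So state the convention (``locally constant'' means constant on the cosets of a single lattice) explicitly, after which your final display $f=\sum_{\bar a}f(a)[a+N]$ closes the argument with no further work.
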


For each prime $p$, we will denote by $\cS(V^{(p)})$ the space
\begin{equation}
	\bigotimes_{\ell\neq p}{}' \cS(V_\ell),
\end{equation}
and refer to elements $f'\in\cS(V^{(p)})$ as a test functions \emph{away from $p$}

Our convention will be to let $\GL(V)$ act on $V$ on the left. By duality, this endows $\cS(V)$ with a \emph{right} $\GL(V)$ action, $(f|\gamma)(v):=f(\gamma v)$.

If $v_1,\ldots,v_r$, $r\leq n$, are linearly independent vectors in $V_\R$, we write $C^o(v_1,\ldots,v_r)$ for the set of all positive linear combinations $C^o(v_1,\ldots,v_r)=\{\sum a_i v_i | a_i\in\R_+\}$. $C(v_1,\ldots,v_r)$ will denote the closed cone $C(v_1,\ldots,v_r)=\{\sum a_i v_i | a_i\in\R_+\}$. In either case, we will call the rays in the directions $v_1,\ldots,v_r$ the \emph{extremal rays}. More generally, a \emph{simplicial} cone $C$ is a finite union of open cones (glued along boundaries). A \emph{pointed} cone is a cone that does not contain any lines. 

We will say a pointed open cone $C\subset V_\R$ is rational if it is generated by vectors $v_1,\ldots,v_r\in V$. More generally, a simplicial cone is rational if it us the union of rational open cones.

\section{Shintani's method}
\subsection{Shintani zeta functions}

If $C$ is a pointed simplicial cone in $V_\R$ and $f\in\cS(V)$ is a test function, the \emph{Shintani zeta function} $\zeta_{Sh}(f,C;s)$ is defined, for $\Real(s)\gg0$, as the sum
\begin{equation*}
	\zeta_{Sh}(f,C;s):=\sum_{v \in C} \frac{f(v)}{\Norm(v)^s}
\end{equation*}
where $\Norm(v)=e_1^*(v)\cdots e_n^*(v)$ is the product of the coordinates. One can show that the sum converges for $\Real(s)\gg0$ (see, for example, \cite{Hida}), and Shintani showed the these have meromorphic continuation to $s\in\C$. Moreover, the values $\zeta_{SH}(f,C;-k)$ can be expressed in terms of Bernoulli polynomials.

Shintani used these results to study the special values of Hecke $L$-functions of totally real fields. Let us suppose that  $F$ is a totally real field of degree $n$. The Hecke $L$-functions of $F$ decompose as sums of partial zeta functions, sometimes called \emph{ray class zeta functions}. If $\frak{f}$ is an integral ideal of $F$ and $\frak{a}$ is a fractional ideal relatively prime to $\frak{f}$, then the ray class zeta function for $[\frak{a}]_{\frak{f}\infty}$ is defined by
\begin{equation*}
	\zeta([\frak{a}]_{\frak{f}\infty},s):=\sum_{\substack{\frak{b}\subset\cO_F\\\frak{b}\sim_\frak{f} \frak{a}}} \frac{1}{\Norm\frak{b}^s} \text{	for} \Real(s)\gg0
\end{equation*}
where the sum is over all integral ideals representing $[\frak{a}]_{\frak{f}\infty}$ in the narrow ray class group. Two ideals $\frak{a}$ and $\frak{b}$ are equivalent in the narrow ray class group if and only if there exists a totally positive $\alpha\equiv1\pmod{\frak{f}}$ such that $(\alpha)=\frak{b}\frak{a}^{-1}$. Put 
\begin{equation*}
	E(\frak{f})=\{ u\in \cO_F^\times | u\gg 0 \text{ and } u\equiv 1\pmod{\frak{f}}\},
\end{equation*}
so that $(\alpha)\sim_\frak{f} (\beta)$ if and only if $\alpha\beta^{-1}\in E(\frak{f})$. We may rewrite the sum as
\begin{equation}\label{zetadecomp}
	\zeta([\frak{a}]_\frak{f},s)=\sum_{\substack{\beta\in (a+\frak{a}^{-1}\frak{f})/E(\frak{f})\\ ~~\beta\gg0}} \frac{1}{\Norm(\frak{a}\beta)^s}=\Norm\frak{a}^{-s}\sum_{\substack{\beta\in (a+\frak{a}^{-1}\frak{f})/E(\frak{f})\\ ~~\beta\gg0} } \frac{1}{\Norm\beta^s}
\end{equation}
where $a\in\frak{a}^{-1}$ is any fixed element congruent to $1\pmod{\frak{f}}$. If $\frak{a}$ is integral, then it suffices to take $a=1$. 

In order to interpret these ray class zeta functions as Shintani zeta functions, we embed $F$ in $\R^n$. Write $\tau_1,\ldots,\tau_n$ for the $n$ embeddings of $F$ into $\R$ and $F\hookrightarrow\R^n$ by $\alpha\mapsto(\tau_1(\alpha),\ldots,\tau_n(\alpha))$. The norm $N=e_1^*\cdots e_n^*$ on $\R^n$ extends the usual norm on $F$ to $\R^n$. Shintani's insight was to construct a fundamental domain for the action of $E(\frak{f})$ by decomposing $\R_+^n$ (where the totally positive elements live) into disjoint polyhedral cones. For example, if $F$ is a real quadratic field and $\varepsilon$ is a totally positive unit generating $E(\frak{f})$, then the polyhedral cone $C^o(1,\varepsilon)\cup C^o(1)$ forms a fundamental domain for the action of $\cO_F^\times$ (extended continuously to $\R^n$). More generally, Shintani proved
\begin{prop}[Proposition 4 of \cite{Shi76}]

Let $E\subset (\cO_F^\times)_+$ be a finite index subgroup of totally positive units. Then there exists a disjoint union of simplical cones, $C$, such that $\varepsilon C\cap C=\emptyset$ for all $\varepsilon\in E$ and
\[
	\R^n_+=\coprod_{\varepsilon\in E} \varepsilon C.
\]
\end{prop}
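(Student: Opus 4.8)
The plan is to reduce the statement to one about a full lattice acting by translations on a Euclidean space, resolve that by an equivariant triangulation, and cone the resulting decomposition back up to $\R^n_+$. First, every $\varepsilon\in E$ has $\Norm(\varepsilon)=\prod_i\tau_i(\varepsilon)=1$ (a unit has norm $\pm1$, and a product of positive reals is positive), so $E$ preserves the hypersurface $S:=\{x\in\R^n_+:\Norm(x)=1\}$. Since $\R^n_+=\R_{>0}\cdot S$ with the scaling action commuting with $E$ and meeting each ray exactly once, it suffices to produce a fundamental domain $D$ for $E$ on $S$ that is a \emph{finite} disjoint union of sets $C^o(v_1,\dots,v_k)\cap S$ with $v_i\in\R^n_+$: indeed $\R_{>0}\cdot\bigl(C^o(v_1,\dots,v_k)\cap S\bigr)=C^o(v_1,\dots,v_k)$ (any $w=\sum a_iv_i$ with $a_i>0$ lies on the ray of $w/\Norm(w)^{1/n}\in C^o(v_1,\dots,v_k)\cap S$), so $C:=\R_{>0}\cdot D$ is then a finite disjoint union of simplicial cones with $\varepsilon C\cap C=\R_{>0}\cdot(\varepsilon D\cap D)=\emptyset$ for $\varepsilon\neq1$ and $\coprod_{\varepsilon\in E}\varepsilon C=\R^n_+$.

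Next I would transport the picture through the homeomorphism $\ell\colon\R^n_+\xrightarrow{\ \sim\ }\R^n$, $x\mapsto(\log x_1,\dots,\log x_n)$, which sends $S$ onto the hyperplane $H:=\{y:\sum_i y_i=0\}\cong\R^{n-1}$ and conjugates the action of $\varepsilon$ on $S$ to translation by $\ell(\varepsilon)=(\log\tau_1(\varepsilon),\dots,\log\tau_n(\varepsilon))\in H$ (well defined since $E$ consists of totally positive units). As the totally positive units have finite index in $\cO_F^\times$ and $E$ has finite index in them, Dirichlet's unit theorem shows $\Lambda:=\ell(E)$ is a full-rank lattice in $H$; hence $E$ acts on $S$ freely, properly discontinuously and cocompactly, with $S/E$ a compact $(n-1)$-manifold (a torus). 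In particular a bounded fundamental domain exists, but it is not polyhedral in the original linear coordinates — producing a polyhedral, in fact simplicial, one is the real issue.

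\textbf{The geometric core.} I would fix an $E$-invariant set $P\subset S$ that is a union of finitely many $E$-orbits — for instance, the orbits of the norm-one rescalings of finitely many totally positive elements of $\cO_F$. Since the image of $\cO_F$ in $\R^n$ is a full lattice, the rays through $\cO_F\cap\R^n_+$ are dense in $\R^n_+$, so the image $\bar P$ of $P$ in the compact manifold $S/E$ may be taken as fine as we please (and with $P$ of this shape the resulting cones come out rational, which the applications require). I would then pick a sufficiently fine triangulation of $S/E$ with vertices in $\bar P$ — fine enough that each closed simplex embeds — lift it to an $E$-equivariant triangulation of $S$ with vertex set $P$, and replace each simplex on vertices $p_0,\dots,p_k\in P$ by the ``radial simplex'' $C^o(p_0,\dots,p_k)\cap S$; for a fine enough starting triangulation this produces an $E$-invariant decomposition of $S$ into radial simplices with only finitely many $E$-orbits of cells. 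Coning off gives an $E$-invariant simplicial fan on $\R^n_+$ with finitely many $E$-orbits of cones, and taking one relatively open cone from each orbit gives the desired finite disjoint union $C$ of simplicial cones: since the action is free and each cell embeds in $S/E$, no nontrivial $\varepsilon$ fixes a cell, and a short bookkeeping with this then yields $\varepsilon C\cap C=\emptyset$ for $\varepsilon\neq1$ and $\R^n_+=\coprod_{\varepsilon\in E}\varepsilon C$. The main obstacle is exactly this step: constructing the $E$-equivariant triangulation with finitely many orbits, verifying that passing to radial simplices preserves being an honest decomposition (radial simplices are uniformly close to affine ones at small scale, where $S$ is nearly Euclidean — plausible, but it must be checked), and, most delicately, distributing the lower-dimensional boundary cells among the $E$-translates so that $C$ is an \emph{exact} fundamental domain rather than one up to its boundary. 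This boundary bookkeeping is the genuinely nontrivial part of Shintani's argument.
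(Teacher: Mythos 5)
This proposition is not proved in the paper at all: it is imported verbatim as Proposition 4 of Shintani's 1976 paper, so the relevant comparison is with Shintani's original argument. Your two reduction steps are correct and cleanly justified: $E$ preserves the norm-one hypersurface $S$, an exact fundamental domain for $E$ on $S$ that is a finite disjoint union of radial simplices $C^o(v_1,\dots,v_k)\cap S$ cones up to the desired $C$, and the logarithm map together with Dirichlet's unit theorem identifies the action on $S$ with a full-rank lattice of translations of $\R^{n-1}$, so $S/E$ is a compact torus. The problem is that everything after this --- which is where essentially all of the content of the proposition lives --- is a program rather than a proof, as you acknowledge. Two substantial existence statements are asserted without argument: (1) that $S/E$ admits a triangulation with vertices in a prescribed finite set of rational rays, fine enough that each closed simplex embeds, lifting to an $E$-equivariant triangulation of $S$ with finitely many cell orbits; and (2) that replacing each affine simplex by the radial simplex on the same vertices still yields an exact partition of $S$, and that the vertices of each cell are linearly independent in $\R^n$ (equivalently, the affine span of each simplex misses the origin), so the resulting cones really are simplicial. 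Both require genuine quantitative small-scale arguments (general position of the vertex set, comparison of radial and affine simplices against the curvature of $S$). By contrast, the final bookkeeping you single out as ``the genuinely nontrivial part'' is actually the easy step in your setup: once the relatively open cones of the fan partition $\R^n_+$ and $E$ permutes them freely with finitely many orbits (freeness follows from freeness on $S$ plus compactness of each closed cell), one representative per orbit is automatically an exact fundamental domain.

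It is also worth noting that Shintani's own proof takes a different and more elementary route that sidesteps both gaps. He first produces finitely many rational open simplicial cones whose $E$-translates cover $\R^n_+$, each meeting only finitely many translates of the others (this uses only the compactness of $S/E$, not a triangulation), and then disjointifies combinatorially, using the fact that the class of finite disjoint unions of rational open simplicial cones is closed under finite intersections and set differences. That argument never constructs a globally coherent $E$-invariant fan and requires no comparison of radial with affine simplices. So: your outline is viable in principle and would yield a somewhat stronger conclusion (an equivariant fan), but as written items (1) and (2) above are real gaps that must be filled; if the goal is simply to prove the proposition, the combinatorial route is considerably shorter to make rigorous.
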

Such a collection of cones will be called a \emph{Shintani domain} for $E$. A Shintani domain lets us decompose the ray class zeta functions as
\begin{equation}\label{E:ray_is_shintani}
	\zeta([\frak{a}]_\frak{f},s)=(\Norm\frak{a})^{-s}\sum_{\alpha\in (a+\frak{a}^{-1}\frak{f} \cap C)} \frac{1}{\frak{\Norm{\alpha}}^s}=(\Norm\frak{a})^{-s}\zeta_{Sh}([a+\frak{a}^{-1}\frak{f}],C;s)
\end{equation}
reducing the study of special values of Hecke $L$-functions to the study of Shintani zeta functions.

\subsection{Special values}
Let us now suppose that $C$ is the ``open" cone $C=C^o(v_1,\ldots,v_r)$, with $v_1,\ldots,v_r\in V_{\R,+}$.  We remark that any cone can be written as a disjoint union of these ``open" cones, hence it suffices to treat only this case. Shintani's meromorphic continuation of
\begin{equation*}
	\zeta_{SH}(f,C;s)=\sum_{v\in C} \frac{f(v)}{N(v)^s}
\end{equation*}
generalizes Riemann's arguments for the meromorphic continuation of $\zeta(s)$. First, one expresses $\zeta_{SH}(f,C;s)$ as Mellin-transform by
\begin{align*}
	\sum_{v\in C} \frac{f(v)}{N(v)^s}=\sum_{v\in C} f(v)\frac{1}{\Gamma(s)^n} \int_{(0,\ldots,0)}^{(\infty,\ldots,\infty)} e^{-(e_1^*(v)x_1+\cdots e_n^*(v) x_n)} x^s\frac{dx}{x}
=\\
\frac{1}{\Gamma(s)^n}\int_{(0,\ldots,)}^{(\infty,\ldots,\infty)} \sum_{v\in C} f(v)e^{-(e_1^*(v)x_1+\cdots e_n^*(v) x_n)} x^s\frac{dx}{x}.
\end{align*}
To simplify notation, write $v\cdot x$ for $e_1^*(v)x_1+\cdots e_n^*(v)x_n)$. With the hypothesis that $f$ is rational with respect to $C$, there exist $a_1,\ldots,a_r\in\Q$ such that $f$ is periodic with respect to the lattice $a_1v_1\Z+\cdots +a_rv_r\Z$. After rescaling $v_1,\ldots,v_r$, we may assume $f$ is periodic with respect to the lattice $v_1\Z+\cdots v_r\Z$, then we can rewrite $ \sum_{v\in C} f(v)e^{-v\cdot x}$ as the ``rational function"
\begin{equation*}
	 \sum_{v\in C} f(v)e^{-v\cdot x}=\frac{1}{1-e^{-v_1\cdot x}}\cdots\frac{1}{1-e^{-v_r\cdot x}} \sum_{v\in\cP} f(v) e^{-v\cdot x},
\end{equation*}
where $\cP\subset C$ is fundamental domain for translation by $v_1\Z_{\geq 0}+\cdots v_n\Z_{\geq 0}$. Switching signs, write
\begin{equation*}
	G(x_1,\ldots,x_n)=\frac{1}{1-e^{v_1\cdot x}}\cdots\frac{1}{1-e^{v_r\cdot x}} \sum_{v\in\cP} f(v) e^{v\cdot x}.
\end{equation*}
The function $\frac{1}{e^z-1}$ has a simple pole at $z=0$ with residue $1$, so $G(x_1,\ldots,x_n)$ potentially has simple poles along the hyperplanes $v_1\cdot x=0,\ldots,v_r\cdot x=0$.

Next, one would like to find the Mellin-transform of $G(-x)$ as a term in the complex contour integral
\begin{equation*}
	(1-e^{2\pi i sn})\int_{(0,\ldots,)}^{(\infty,\ldots,\infty)} G(-x) x^s\frac{dx}{x}=\int_C G(-z) e^{(s-1)\log(z_1)+\cdots (s-1)\log(z_n)}  dz,
\end{equation*}
where $C$ is a product of keyhole contours $+\infty\rightarrow +\infty$ around $0$. However, when $n>1$, the poles of $G$ will intersect any sphere about the origin, hence our contour $C$, and we come to an impasse. Shintani managed to circumvent these problems by cleverly decomposing the domain of the Mellin transform. For details, we refer the reader to Shintani's original paper \cite{Shi76} or the notes of Greenberg and Dasgupta \cite{AWS11} for very readable accounts.

The following theorem is a reformulation of Proposition $1$ of \cite{Shi76}.
\begin{thm}[Shintani]
The function $\zeta_{SH}(f,C;s)$ has meromorphic continuation to the whole complex plane with at most a simple pole at $s=1$. Moreover, the special values are given by
\begin{equation}\label{Shintaniresidue}
\zeta_{SH}(f,C;-k)=\frac{1}{n k!^n}\left( \sum_{i=1}^n\Coeff(G(ux_1,ux_2,\ldots,ux_n)\mid_{x_i=1};u^{nk}x_2^k\cdots x_n^k)\right)
\end{equation}
where $\Coeff(F(x_1,\ldots,x_n), x_1^{k_1}\cdots x_n^{k_1})$ denotes the coefficient of $x_1^{k_1}\cdots x_n^{k_1}$ in the Laurent series of $F$ about the origin. 
\end{thm}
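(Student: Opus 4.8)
The plan is to follow Shintani's argument in \cite{Shi76}, reducing the computation of $\zeta_{SH}(f,C;-k)$ to a careful analysis of the Mellin transform and contour integral sketched above, but with enough bookkeeping to extract the explicit coefficient formula \eqref{Shintaniresidue}. First I would reduce to the case of an ``open'' simplicial cone $C=C^o(v_1,\ldots,v_n)$ of full dimension $n$ (after rescaling the $v_i$ so that $f$ is periodic with respect to $v_1\Z+\cdots+v_n\Z$, as in the discussion preceding the theorem); the lower-dimensional cones contribute a holomorphic function whose special values can be handled by induction on dimension or absorbed into the rational-function formalism, and a general cone is a finite disjoint union of such pieces by additivity of $\zeta_{SH}$ in $C$.

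The analytic heart is the decomposition trick. Writing $\zeta_{SH}(f,C;s)=\frac{1}{\Gamma(s)^n}\int_{(0,\infty)^n} G(-x)\, x^s\,\frac{dx}{x}$ with $G$ the rational function defined above, I would slice the positive orthant $(0,\infty)^n$ into the $n$ regions $R_i=\{x: x_i\le x_j \text{ for all } j\}$, so that on $R_i$ one can substitute $x_j=x_i t_j$ with $t_j\ge 1$ and change variables. This is exactly the device that keeps the pole hyperplanes $v_j\cdot x=0$ away from the relevant one-dimensional contour: on each $R_i$, after integrating out $x_i$ via a one-variable keyhole/Hankel contour $\int_{+\infty}^{+\infty}$ around $0$, the residual integral in the $t_j$ variables converges and the value at $s=-k$ is picked out by the residue of the one-variable integrand at the origin. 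The residue computation turns $\frac{1}{\Gamma(s)^n}\int \cdots$ at $s=-k$ into a coefficient extraction from the Taylor/Laurent expansion of $G$: concretely, setting the homogenizing variable $u=x_i$ and $x_i=1$ inside $G$, the value at $-k$ becomes $\frac{1}{n\,k!^n}$ times the coefficient of $u^{nk}x_2^k\cdots x_n^k$ (or its analogue with the roles permuted for each $i$), summed over $i=1,\ldots,n$. The factor $\frac1n$ arises because each of the $n$ slices contributes symmetrically, the $k!^n$ from the $n$ factors of $\Gamma(-k+\cdots)^{-1}$ evaluated through $\Gamma(s)\sim(-1)^k/(k!\,(s+k))$, and the homogeneity degree $G$ has under scaling all $x_j\mapsto ux_j$ matches the total degree $nk$ demanded in the exponent of $u$.

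The main obstacle I expect is making the interchange of sum and integral, the contour deformations, and the slicing rigorous \emph{uniformly in $s$} near $s=-k$ — i.e.\ justifying that the a priori formal manipulations define the meromorphic continuation and that no spurious poles or boundary terms appear from the hyperplanes $x_i=x_j$ along which the slices are glued. Concretely one must check that $G(-x)$ decays appropriately as $\|x\|\to\infty$ within each region (it does, since each factor $1/(1-e^{v_j\cdot x})$ is bounded there and the numerator is a finite exponential sum), control the behavior near the coordinate hyperplanes where $G$ has its simple poles, and verify that the boundary contributions from adjacent slices cancel. Once the analytic continuation is in hand, extracting \eqref{Shintaniresidue} is a bookkeeping exercise in one-variable residue calculus: expand $\frac{1}{1-e^{v_j\cdot x}}$ in powers, collect the monomial $u^{nk}x_2^k\cdots x_n^k$, and match constants. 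I would present the slicing and the one-variable residue step in detail and cite \cite{Shi76} (or the expositions \cite{AWS11}) for the convergence and continuation estimates, since those are standard and orthogonal to the combinatorial formula that we actually need downstream.
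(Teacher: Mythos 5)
Your outline is correct and is essentially the argument the paper itself defers to: the paper does not prove this theorem but states it as a reformulation of Proposition 1 of \cite{Shi76}, sketching only the Mellin-transform setup and noting that Shintani's decomposition of the domain of integration (your slicing into the regions $R_i=\{x: x_i\le x_j\}$, followed by a one-variable Hankel contour in $x_i$ and residue extraction, which is the source of the $\frac{1}{n}$, the $k!^{-n}$, and the coefficient of $u^{nk}x_2^k\cdots x_n^k$) is what circumvents the impasse with the pole hyperplanes. Since you reproduce that route and cite the same sources for the convergence and continuation estimates, there is nothing to add.
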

Note that $G(x_1,\ldots,x_n)$ is not necessarily a sum of monomials $x_1^{k_1}\cdots x_n^{k_n}$ near $0$ (consider $\frac{e^{x+y}}{e^{x+y}-1}=\frac{1}{x+y}\sum_{n,m\geq 0}B_{n+m}\frac{x^ny^m}{n!m!}$). However, it's not hard to see that $G(ux_1,\ldots,ux_n)|_{x_1=1}$ has a well-defined Laurent series in powers of $u,x_2,\ldots,x_n$. If $G$ happens to be holomorphic at the origin, then equation (\ref{Shintaniresidue}) simplifies to 
\begin{align}
\zeta_{SH}(f,C;-k)=\frac{1}{k!^n}\Coeff(G(x_1,\ldots,x_n), x_1^k\cdots x_n^k)\\
=\Coeff(\frac{\partial^{nk}}{\partial^k x_1 \cdots \partial^k x_n}G(x_1,\ldots,x_n);x_1^0\cdots x_n^0)\\
=\frac{\partial^{nk}}{\partial^k x_1 \cdots \partial^k x_n}G(x_1,\ldots,x_n)\mid_{x=0}\label{simpleShintani}
\end{align}

\section{Pseudo-measures and zeta values}
\subsection{Pseudo-measures}
Let $U\subset V_p$ be a compact open.
\begin{defn}
$\cC(U)$ is the $\Q_p$ vector space of continuous functions $f:U\rightarrow \Q_p$. This is a $\Q_p$-Banach space under the sup-norm, $|f|=\sup_{v\in U} |f(v)|_p$.
\end{defn}

\begin{defn}
A $p$-adic measure $\mu$ on $U$ is a continuous linear functional $\mu:\cC(U)\rightarrow\Q_p$. We write $\cM(U)$ for the $\Q_p$-Banach space $\Hom_{cts}(\cC(U),\Q_p)$.
\end{defn}

The fundamental example is the Dirac delta. For each $v\in U$,  $\delta_v\in\cM(U)$ is defined by $\delta_v(f):=f(v)$.

The convolution of two measures $\mu,\nu\in\cM(U)$ is defined by
\begin{equation*}
	(\mu\ast\nu)(f)=\int_U \left(\int_U f(v+w)d\nu(w)\right)d\mu(v).
\end{equation*}
Note that convolving by $\delta_v$ has the effect of translating by $v$: $(\mu\ast\delta_v)(f)=\int_U f(v+w)d\mu(w)$. If $U$ is a lattice, then $\cM(U)$ is a commutative ring under the convolution product, and is isomorphic to a power series ring, as we will recall shortly. In particular, $\cM(U)$ is a domain.

The space of pseudo-measures on $U$ is a subring of the field of fractions of $\cM(L_p)$ which, in some sense, accommodates the poles (at $s=1$) of the $p$-adic zeta functions we construct. Let $S\subset\cM(L_p)$ denote the multiplicative subset generated by the set $\{\delta_0-\delta_v : v\in L_p,v\neq 0\}$.

\begin{defn}
The space of $p$-adic pseudo-measures on $L_p$ is the localization
\begin{equation*}
	\widetilde{\cM}(L_p):=S^{-1}\cM(L_p).
\end{equation*}
\end{defn}
Note that this definition differs from some standard definitions, e.g. Coates (\cite{Co89}). 

\begin{prop}
The natural map $\cM(L_p)\rightarrow\widetilde{\cM}(L_p)$ is injective.
\end{prop}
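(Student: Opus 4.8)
The statement is that the natural localization map $\cM(L_p)\to\widetilde{\cM}(L_p)=S^{-1}\cM(L_p)$ is injective, where $S$ is the multiplicative set generated by $\{\delta_0-\delta_v : v\in L_p, v\neq 0\}$. Since localization of a commutative ring $R$ at a multiplicative set $S$ is injective precisely when $S$ contains no zero-divisors, and since $\cM(L_p)$ has already been asserted to be a domain (being isomorphic to a power series ring), it suffices to show that $0\notin S$, i.e. that each generator $\delta_0-\delta_v$ is nonzero and hence that no finite product of generators vanishes. In a domain this is immediate once we know each generator is nonzero.

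\textbf{Plan.} First I would recall the Amice-type isomorphism $\cM(L_p)\cong\Q_p[[T_1,\dots,T_n]]$ (or more invariantly, the completed group algebra $\Q_p[[L_p]]$ viewed as a power series ring after a choice of topological generators), under which $\delta_v$ corresponds to the group-like element $[v]$, so that $\delta_0-\delta_v \mapsto 1-[v]$. Since $L_p\cong\Z_p^n$ is torsion-free, for $v\neq 0$ the element $[v]$ is not the identity, and $1-[v]$ is a nonzero (indeed, nonzero-divisor) element of the completed group algebra: under the isomorphism sending a chosen topological generator to $1+T_i$, the element $1-[v]$ has nonzero linear term, so it is a nonzero power series and hence a nonzerodivisor in the domain $\Q_p[[T_1,\dots,T_n]]$. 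Therefore every generator of $S$ is a nonzerodivisor. Second, because $\cM(L_p)$ is a domain, any finite product of nonzerodivisors is again a nonzerodivisor, so $S$ consists entirely of nonzerodivisors and in particular $0\notin S$. Finally, I would invoke the standard fact that for a multiplicative subset $S$ of nonzerodivisors in a commutative ring $R$, the localization map $R\to S^{-1}R$ is injective: if $r/1 = 0$ in $S^{-1}R$ then $sr=0$ for some $s\in S$, and since $s$ is a nonzerodivisor, $r=0$.

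\textbf{Alternative / more self-contained route.} If one prefers not to lean on the Amice isomorphism, one can argue directly: evaluate against the constant function $\mathbf{1}$ and against suitable locally analytic characters $\chi:L_p\to\Q_p^\times$ to see $(\delta_0-\delta_v)(\chi) = 1-\chi(v)$, which is nonzero for an appropriate choice of $\chi$ when $v\neq 0$; this shows each generator is nonzero, and then one still needs that $\cM(L_p)$ is a domain to upgrade ``nonzero'' to ``nonzerodivisor.'' I would present the completed-group-algebra argument as the cleaner one.

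\textbf{Main obstacle.} There is no serious obstacle; the only point requiring care is the identification $\cM(L_p)\cong\Q_p[[L_p]]$ together with the claim that $1-[v]$ is a nonzerodivisor, which rests on $L_p$ being torsion-free — this is where the hypothesis that $L_p$ is a lattice (so $\cong\Z_p^n$) is genuinely used. I would make sure to state explicitly that this is the same isomorphism ``we will recall shortly'' referenced just above the proposition, so the argument is not circular with respect to the exposition order; if the detailed construction of that isomorphism only appears later, I would instead run the direct character-evaluation argument to establish $0\notin S$, and defer the domain property to its own citation.
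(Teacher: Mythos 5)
Your proposal is correct and follows essentially the same route as the paper, which simply observes that $\cM(L_p)$ is (via the Amice transform) isomorphic to a subring of a power series ring over $\Q_p$ and hence an integral domain, so that localization is injective. Your additional verification that each generator $\delta_0-\delta_v$ is nonzero (hence a nonzerodivisor) is a detail the paper leaves implicit but is a worthwhile inclusion, since injectivity would fail if $0\in S$.
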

\begin{proof}
This follows from the fact (due to Amice) that $\cM(L_p)$ is isomorphic to a subring of power series over $\Q_p$, and thus is an integral domain. We will recall this fact in greater detail in the following section.
\end{proof}

The fundamental example of a pseudo-measure is the classical Kubota-Leopoldt $p$-adic zeta function. If we take $V=\Q$, then there is a unique pseudo-measure $\xi\in\cM(\Z_p)$ satisfying $(\delta_0-\delta_1)\ast\xi=\delta_0$. This pseudo-measure interpolates (in a sense that can be made precise) the special values of the Riemann zeta function. After a brief foray into measures on $L_p$, we shall generalize this  to pseudo-measures associated to cones (Propostion \ref{P:zeta_values}).

\subsection{The Amice transform}
After choosing coordinates, all computations will reduce to the case of measures on $\Z_p^n$. The space of measures on $\Z_p^n$ can be explicitly described in terms of power series over $\Z_p$. We refer to \cite{Am64} or \cite{Co04} for proofs.
When $n=1$, Mahler's theorem gives an ON-basis of $\cC(\Z_p)$ via the generalized binomial coefficients
\[
	\binom{x}{k}:=\begin{cases}\frac{x(x-1)\cdots(x-k+1)}{k!} &\text{ if }k\geq 1;\\
				1 &\text{ if }k=0.\end{cases}
\]
This generalizes in a straightforward way to the case of several variables. 
\begin{prop}\label{Mahler}
The functions $\left\{\binom{x_1}{k_1}\cdots\binom{x_n}{k_n}\right\}$ form an ON-basis of $\cC(\Z_p^n)$. Concretely, every $f\in\cC(\Z_p^n)$ can be written uniquely as
\[
	f(x_1,\ldots,x_n)=\sum_{k_1,\ldots,k_n\geq0} a_{k_1,\ldots,k_n} \binom{x_1}{k_1}\cdots\binom{x_n}{k_n}
\]
with $|a_{k_1,\ldots,k_n}|_p\rightarrow 0$ and $||f||=\sup |a_{k_1,\ldots,k_n}|_p$.
\end{prop}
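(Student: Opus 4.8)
The plan is to deduce the $n$-variable statement from the classical $n=1$ Mahler theorem by an induction on $n$, viewing $\cC(\Z_p^n)$ as continuous functions on $\Z_p$ valued in the Banach space $\cC(\Z_p^{n-1})$. First I would recall the base case $n=1$: Mahler's theorem asserts that every $f\in\cC(\Z_p)$ has a unique expansion $f(x)=\sum_{k\geq 0} a_k \binom{x}{k}$ with $a_k\to 0$ and $\|f\|=\sup_k|a_k|_p$; the coefficients are recovered explicitly by the finite-difference formula $a_k=(\Delta^k f)(0)=\sum_{j=0}^k (-1)^{k-j}\binom{k}{j}f(j)$. For the inductive step, fix $f\in\cC(\Z_p^n)$ and regard it as an element $\tilde f\in\cC(\Z_p,\cC(\Z_p^{n-1}))$ via $\tilde f(x_1)(x_2,\ldots,x_n)=f(x_1,\ldots,x_n)$; continuity in the product topology together with compactness gives that $\tilde f$ is indeed a continuous (even uniformly continuous) Banach-space-valued function, with $\|\tilde f\|_{\sup}=\|f\|$.

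Next I would invoke the Banach-space version of Mahler's theorem: for any $p$-adic Banach space $B$, every $g\in\cC(\Z_p,B)$ expands uniquely as $g(x_1)=\sum_{k_1\geq 0} b_{k_1}\binom{x_1}{k_1}$ with $b_{k_1}=(\Delta^{k_1}g)(0)\in B$, $\|b_{k_1}\|\to 0$, and $\|g\|=\sup_{k_1}\|b_{k_1}\|$. (If one prefers not to cite this, it follows from the scalar case applied to $\lambda\circ g$ for all $\lambda$ in the dual, plus a density/norm argument; but since the paper is content to cite \cite{Am64}, \cite{Co04}, I would simply cite the Banach-valued form.) Applying this with $B=\cC(\Z_p^{n-1})$ yields $f=\sum_{k_1} b_{k_1}(x_2,\ldots,x_n)\binom{x_1}{k_1}$ with $b_{k_1}\in\cC(\Z_p^{n-1})$, $\|b_{k_1}\|\to 0$, $\sup_{k_1}\|b_{k_1}\|=\|f\|$. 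Now apply the inductive hypothesis to each $b_{k_1}$: $b_{k_1}(x_2,\ldots,x_n)=\sum_{k_2,\ldots,k_n} a_{k_1,\ldots,k_n}\binom{x_2}{k_2}\cdots\binom{x_n}{k_n}$ with $\sup_{k_2,\ldots,k_n}|a_{k_1,\ldots,k_n}|_p=\|b_{k_1}\|$ and the coefficients tending to $0$. Substituting gives the claimed expansion $f=\sum_{k_1,\ldots,k_n}a_{k_1,\ldots,k_n}\binom{x_1}{k_1}\cdots\binom{x_n}{k_n}$, and combining the two sup-norm identities gives $\|f\|=\sup_{k_1}\|b_{k_1}\|=\sup_{k_1,\ldots,k_n}|a_{k_1,\ldots,k_n}|_p$. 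The decay $|a_{k_1,\ldots,k_n}|_p\to 0$ as $(k_1,\ldots,k_n)\to\infty$ follows because $\|b_{k_1}\|\to 0$ while each $b_{k_1}$ contributes only finitely many coefficients above any given threshold.

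For uniqueness, I would observe that the coefficients are forced: iterating the finite-difference recovery formula shows $a_{k_1,\ldots,k_n}=(\Delta_1^{k_1}\cdots\Delta_n^{k_n}f)(0,\ldots,0)$, where $\Delta_i$ is the forward difference operator in the $i$-th variable; hence any two expansions of the same $f$ have identical coefficients. Finally I would remark that the functions $\binom{x_1}{k_1}\cdots\binom{x_n}{k_n}$ have sup-norm $1$ on $\Z_p^n$ and that the norm identity $\|f\|=\sup|a_{k_1,\ldots,k_n}|_p$ is exactly the statement that they form an orthonormal basis in the sense of $p$-adic Banach spaces. The one point requiring genuine care—the ``main obstacle''—is the interchange of the two summations and the verification that the combined family $(k_1,\ldots,k_n)\mapsto a_{k_1,\ldots,k_n}$ tends to $0$ along the whole index set rather than merely in each variable separately; this is where the ultrametric inequality and the uniform bound $\sup_{k_1}\|b_{k_1}\|=\|f\|$ do the work, guaranteeing that only finitely many coefficients exceed any $\varepsilon>0$ in absolute value.
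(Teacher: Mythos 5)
Your argument is correct. Note that the paper does not actually prove this proposition: its ``proof'' is a citation to the remarks following Proposition 7 of Amice's paper, after asserting that the one-variable case ``generalizes in a straightforward way.'' Your induction on $n$ via the identification $\cC(\Z_p^n)\cong\cC(\Z_p,\cC(\Z_p^{n-1}))$ and the Banach-space-valued Mahler theorem is exactly the standard way to make that generalization precise, and all the delicate points are handled properly: the recovery of coefficients by iterated finite differences gives uniqueness, the two sup-norm identities compose correctly, and your argument that only finitely many coefficients exceed any $\varepsilon>0$ (finitely many $k_1$ with $\|b_{k_1}\|\geq\varepsilon$, and for each of those finitely many offending multi-indices by the inductive hypothesis) is the right way to get decay along the full index set rather than variable-by-variable. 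The only point I would flag is the parenthetical suggestion that the Banach-valued Mahler theorem follows from the scalar case ``applied to $\lambda\circ g$ for all $\lambda$ in the dual'': duality gives uniqueness and identifies the candidate coefficients $b_{k}=(\Delta^{k}g)(0)$, but the assertions $\|b_k\|\to 0$ and $\|g\|=\sup_k\|b_k\|$ require rerunning the finite-difference estimates with values in $B$ (which works verbatim); citing the Banach-valued form outright, as you propose, is the cleaner course.
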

\begin{proof}
See, for example, the remarks following Proposition $7$, Section 7 in \cite{Am64}.
\end{proof}

Proposition \ref{Mahler} tells us a measure $\mu$ is uniquely determined by the moments $\mu\left(\binom{x_1}{k_1}\cdots\binom{x_n}{k_n}\right)$, and that $||\mu||=\sup_k\left|\binom{x}{k}\right|_p$.  To ease notation, our convention will be to write $x$ and $k$ for the vectors $x=(x_1,\ldots,x_n)$ and $k=(k_1,\ldots,k_n)$. We simply write $\binom{x}{k}=\binom{x_1}{k_1}\cdots\binom{x_n}{k_n}$ when no confusion may arise. 

These moments can be conveniently packaged into (rigid) analytic functions on an appropriate $p$-adic space. 

For each $r\in\R_{+}$, let us write $B(a,r)\subset\C_p$ for the open disc $B(1,r)=\{z\in\C_p: |z-a|_p< r\}$. We will write $\cB$ for the open polydisk $B(1,1)^n$, and denote by $(q_1,\ldots,q_n)$ parameters on $\cB$. The space of (rigid) analytic functions on $\cB$, defined over $\Q_p$, is denoted by $A_{\Q_p}(\cB)$. It is the space of power series, in $q_1-1,\ldots,q_n-1$ over $\Q_p$ with bounded coefficients. For example, if $\alpha=(\alpha_1,\ldots,\alpha_n)\in\Z_p^n$, we will write $q^{\alpha}$ for the function $q\mapsto q_1^{\alpha_1}\cdots q_n^{\alpha_n}$. By the binomial theorem, $q_i^{\alpha_i}$ is analytic in $q_i-1$, so $q^\alpha$ is analytic.

\begin{defn}
Associated to a measure $\mu\in\cM(\Z_p^n)$ is the rigid analytic function (on $\cB$)
\begin{align}
	\cA(\mu)(q_1,\ldots,q_n)= \int_{\Z_p^n} q^xd\mu(x)=\sum_{k\in\Z_{\geq0}^n} \left(\int_{\Z_p^n} \binom{x}{k}d\mu(x)\right)q^k\\
	=\sum_{k_1,\ldots,k_n\geq 0} \left(\int_{\Z_p^n} \binom{x_1}{k_1}\cdots\binom{x_n}{k_n}d\mu(x)\right)(q_1-1)^{k_1}\cdots (q_n-1)^{k_n},
\end{align}
called the Amice transform of $\mu$.
\end{defn}
The function $\cA(\mu)(q_1,\ldots,q_n)\in A_{\Q_p}(\cB)$ uniquely determines the measure $\mu$. What's more, the following theorem of Amice tells us each bounded power series corresponds to a measure:
\begin{thm}[Amice]
The map $\cA$ is an isomorphism of $\Q_p$-Banach algebra $\cM(\Z_p^n,\Q_p)$ and $\Z_p[[q_1-1,\ldots,q_n-1]]\otimes_{\Z_p}\Q_p$.
\end{thm}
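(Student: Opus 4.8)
The plan is to establish the Amice isomorphism $\cA\colon\cM(\Z_p^n,\Q_p)\xrightarrow{\sim}\Z_p[[q_1-1,\ldots,q_n-1]]\otimes_{\Z_p}\Q_p=A_{\Q_p}(\cB)$ by combining Proposition \ref{Mahler} with a careful bookkeeping of norms, and to deduce the multiplicativity from the definition of convolution. First I would check that $\cA$ is well-defined and an isometry onto its image: by Proposition \ref{Mahler}, a measure $\mu$ is determined by its moments $b_k:=\mu\bigl(\binom{x}{k}\bigr)$, and since $\|\mu\|=\sup_k|b_k|_p$ while $\binom{x}{k}$ takes values in $\Z_p$ on $\Z_p^n$, the moment sequence is bounded with $\sup_k|b_k|_p=\|\mu\|<\infty$. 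The displayed expansion $\cA(\mu)(q)=\sum_k b_k(q-1)^k$ (with $(q-1)^k$ shorthand for $\prod_i(q_i-1)^{k_i}$) therefore lies in $A_{\Q_p}(\cB)$ with Gauss norm equal to $\|\mu\|$, so $\cA$ is a linear isometry. The key interchange of sum and integral needed to justify the second equality in the definition — writing $q^x=\sum_k\binom{x}{k}(q-1)^k$ and integrating term by term — is legitimate because for fixed $q\in\cB$ the partial sums of $\binom{x}{k}(q-1)^k$ converge uniformly in $x$ to $q^x$, and $\mu$ is continuous.

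Next I would show $\cA$ is surjective. Given any $g=\sum_k b_k(q-1)^k\in A_{\Q_p}(\cB)$ with $\sup_k|b_k|_p<\infty$, define a linear functional on the dense subspace of finite Mahler expansions by $\mu\bigl(\sum_k a_k\binom{x}{k}\bigr):=\sum_k a_kb_k$; this is bounded by $\sup_k|b_k|_p$ times the sup-norm (using Proposition \ref{Mahler}, $\|f\|=\sup_k|a_k|_p$), hence extends uniquely to a continuous functional $\mu\in\cM(\Z_p^n)$, and by construction $\cA(\mu)=g$. Injectivity is immediate from the isometry property, or equivalently from Proposition \ref{Mahler}. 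This already gives the isomorphism of $\Q_p$-Banach \emph{spaces}; restricting to measures of norm $\leq 1$ identifies $\cM(\Z_p^n,\Z_p)$ with $\Z_p[[q_1-1,\ldots,q_n-1]]$, recovering the integral statement.

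For the algebra structure it remains to verify $\cA(\mu\ast\nu)=\cA(\mu)\cdot\cA(\nu)$. This is a direct computation: by definition of convolution and the multiplicativity of the character $x\mapsto q^x$ on the additive group $\Z_p^n$,
\[
	\cA(\mu\ast\nu)(q)=\int_{\Z_p^n}\!\!\int_{\Z_p^n} q^{v+w}\,d\nu(w)\,d\mu(v)=\int_{\Z_p^n} q^v\,d\mu(v)\int_{\Z_p^n} q^w\,d\nu(w)=\cA(\mu)(q)\,\cA(\nu)(q),
\]
where the splitting of the double integral uses that $q^v$ is a constant (independent of $w$) that can be pulled out of the inner integral. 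One should also note $\cA$ sends the convolution unit $\delta_0$ to the constant function $1$, so it is a unital ring homomorphism. I do not expect a genuine obstacle here — the content is entirely in Proposition \ref{Mahler}, which is quoted — but the step requiring the most care is the justification of the term-by-term integration of $q^x=\sum_k\binom{x}{k}(q-1)^k$: one must observe that, because $|q_i-1|_p<1$, for a fixed $q$ the tail $\sum_{|k|\geq N}\binom{x}{k}(q-1)^k$ has sup-norm over $x\in\Z_p^n$ tending to $0$ as $N\to\infty$ (each $\binom{x}{k}\in\Z_p$), which is exactly what is needed to commute $\mu$ with the infinite sum, and dually the same estimate shows the resulting power series genuinely has bounded coefficients so that it defines an element of $A_{\Q_p}(\cB)$ rather than merely a formal series.
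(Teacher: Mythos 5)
Your argument is correct and is essentially the standard Mahler-duality proof: the paper itself gives no proof of this theorem, deferring to \cite{Am64} and \cite{Co04}, and the argument found there is exactly yours (isometry onto bounded coefficient sequences via the orthonormal Mahler basis of Proposition \ref{Mahler}, surjectivity by extending from finite Mahler expansions, and multiplicativity from $q^{v+w}=q^v q^w$). Your care with the term-by-term integration of $q^x=\sum_k\binom{x}{k}(q-1)^k$ is exactly the point that needs attention, and your uniform tail estimate handles it.
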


\begin{rem}
As a map of commutative rings, this extends to a homomorphism 
\begin{equation*}
\cA:\widetilde{\cM}(\Z_p^n)\hookrightarrow \Frac(\Z_p[[q_1-1,\ldots,q_n-1]]).
\end{equation*}
\end{rem}

\subsection{Pseudomeasures from cones}

In this section, we construct pseudo-measures from the data of a test function $f'\in\cS(V^{(p)})$ and a pointed simplicial cone $C$ (which we think of as the component at $\infty$). Before getting to the construction, we illustrate the ideas with a simple example.

Suppose the vectors $v_1,\ldots,v_n$ form a basis of $V$, and consider the open cone $C=C^o(v_1,\ldots,v_n)$. Consider the compact open lattice $U=\Z_pv_1+\cdots +\Z_pv_n\subset V_p$. If $f'\in\cS(\bA_V^p)$ is a test function away from $p$, then $f'\otimes[U]$ is a test function on $V$. Consider, for a moment, the formal sum of measures on $U$:
\begin{equation}
	\sum_{v\in C} f'\otimes[U](v) \delta_v.
\end{equation}
The techniques of \S2.2 suggest that this represents a pseudo-measure on $U$. Indeed, there exists $a_1,\ldots,a_n\in\Q$ such that $f=f'\otimes[U]$ is periodic with respect to the lattice $\Z a_1v_1+\cdots\Z a_n v_n$. Since $[U]$ is periodic with respect to $\Z v_1+\cdots \Z v_n$, we may take $a_1,\ldots,a_n$ to be $p$-adic units. Then the formal sum represents the pseudo-measure
\begin{equation}
	\mu=\left(\frac{1}{1-\delta_{a_1v_1}}\right)\cdots\left(\frac{1}{1-\delta{a_nv_n}}\right)\sum_{v\in\cP} f(v) \delta_v,
\end{equation}
where $\cP$ is the half-open parallelepiped $\{\sum_{i=1}^n \lambda_i v_i : \lambda_i\in\Q, 0<\lambda_i\leq 1\}$. Note that the sum is actually measure on $U$, since it is a \emph{finite} sum of weighted Dirac measures on $U$.

This pseudo-measure, it turns out, encodes the special values of the Shintani zeta function $\zeta_{Sh}(C,f,s)$. It is useful to keep in mind the following heuristic, which we emphasize is \emph{not} mathematically meaningful (but can be made so!).

If $\mu$ is a measure, $k\geq 0$ an integer, and $N=e_1^*\cdots e_n^*:V\rightarrow\Q$ is our norm function, then $N^k$ is a continuous function on $L_p$ and
\begin{align}
	\int_U N^k d\mu &``=" \sum_{v\in C} f(v) \int_U N^k d\delta_v\\
	&``=" \sum_{v\in C} f(v) N(v)^k\\
	&``=" \zeta_{SH}(f,C;s)\mid_{s=-k}
\end{align}
Again, this reasoning is nonsense but the conclusion is true and follows from Shintani's formulas.

Now let us state a general version of the above construction.
\begin{prop}\label{P:pseudo-measure}
Let $f'\in\cS(V^{(p)})$ be a test function, $C\subset V_\R$ a pointed simplicial cone, and $U\subset L_p$ a compact open. Then there exists a unique pseudo-measure $\mu_{f',C,U}\in\widetilde{\cM}(L_p)$ with Amice transform
\begin{equation}
	\cA(\mu_{f',C,U})(q_1,\ldots,q_n) = \sum_{v\in C} f'\otimes[U](v)q^v
\end{equation}
in a neighborhood of $(q_1,\ldots,q_n)=(0,\ldots,0)$. We remark that the cone is not assumed to be $n$-dimensional, i.e. we may take $C=C^o(v_1,\ldots,v_r)$ with $r<n$.
\end{prop}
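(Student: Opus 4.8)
The plan is to reduce to the basis case treated in the example, then glue. First I would choose coordinates so that $L_p \cong \Z_p^n$, which identifies $\widetilde{\cM}(L_p)$ with $\widetilde{\cM}(\Z_p^n)$ and lets me work with Amice transforms inside $\Frac(\Z_p[[q_1-1,\ldots,q_n-1]])$. Since uniqueness is immediate from the injectivity of $\cA$ on $\widetilde{\cM}(\Z_p^n)$ (the Remark after Amice's theorem), the whole content is \emph{existence}: I must produce an element of $\widetilde{\cM}(L_p)$ whose Amice transform equals the given series near the origin, and in particular I must show that $\sum_{v\in C} f'\otimes[U](v)\, q^v$ is a rational function lying in the image of $\widetilde{\cM}$, i.e. has the form (measure)$/$(product of $\delta_0 - \delta_w$'s).

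The key steps, in order: (1) Reduce to $C$ an open simplicial cone $C^o(v_1,\ldots,v_r)$, since any pointed simplicial cone is a finite disjoint union of such, the assignment $C \mapsto \mu_{f',C,U}$ should be additive on disjoint unions, and the defining series is visibly additive. (2) Reduce to the case $r = n$ and $v_1,\ldots,v_n$ a $\Z$-basis of $L$: after rescaling the $v_i$ by positive rationals (which does not change the cone) and refining, one can arrange that the $v_i$ are primitive and, enlarging the collection if $r<n$ by vectors not affecting the sum — more honestly, one decomposes $C^o(v_1,\ldots,v_r)$ into cones spanned by parts of a basis; here I would invoke that a rational simplicial cone can be subdivided into cones each generated by a subset of a $\Z$-basis of $L_p$ (a standard toric/continued-fraction type argument over $\Z_p$, using that $\GL_n(\Z_p)$ acts transitively on bases). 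In the lower-dimensional case $r<n$ one gets a genuine measure, supported on a sub-lattice, with no denominators. (3) In the basis case, set $U' = \Z_p v_1 + \cdots + \Z_p v_n$; then $f'\otimes[U]$ restricted to $U'$ is periodic modulo a finite-index sublattice $\Z a_1 v_1 + \cdots + \Z a_n v_n$ with $a_i \in \Z_p^\times$, exactly as in the example, so
\begin{equation*}
  \mu = \Bigl(\prod_{i=1}^n \frac{1}{\delta_0 - \delta_{a_i v_i}}\Bigr) \sum_{v\in\cP} f'\otimes[U](v)\,\delta_v
\end{equation*}
is a well-defined element of $S^{-1}\cM(U')$, where $\cP$ is the half-open parallelepiped; one then checks $\cA$ of the finite sum times $\prod(1-q^{a_i v_i})^{-1}$ equals the geometric series $\sum_{v\in C} f'\otimes[U](v) q^v$ near $0$ by the usual expansion $\frac{1}{1-q^{a_iv_i}} = \sum_{m\ge 0} q^{m a_i v_i}$. (4) Finally, push forward along $U' \hookrightarrow L_p$ to land in $\widetilde{\cM}(L_p)$, and check the glued pseudo-measures from different pieces of the subdivision are compatible — which again follows from injectivity of $\cA$, since all Amice transforms agree near the origin by construction.

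The main obstacle I expect is step (2): making precise the claim that a pointed rational simplicial cone in $L_p \otimes \R$ admits a decomposition into open cones each spanned by part of a $\Z$-basis of $L_p$ (or at least by vectors that become a $\Z_p$-basis after localizing), together with tracking the rescaling factors $a_i$ so they stay $p$-adic units — one needs the periodicity lattice of $f'\otimes[U]$ to interact well with the subdivision. Relatedly, one must verify that the various series $\sum_{v \in C_j} f'\otimes[U](v)q^v$ over the pieces $C_j$ genuinely converge in a common neighborhood of the origin in $\cB$ and that their sum telescopes to the series for $C$; this is where a uniform bound on the $a_i$ across the (finite) subdivision is used. Everything else is either formal (localization, functoriality of $\cA$) or a direct geometric-series computation.
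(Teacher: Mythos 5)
Your overall strategy---decompose $C$ into open cones, use periodicity of $f'\otimes[U]$ to write the generating series as $\bigl(\sum_{v\in\cP}f(v)q^v\bigr)\big/\prod_i(1-q^{a_iv_i})$, and get uniqueness from injectivity of the Amice transform---is exactly what the paper intends (the paper gives no formal proof; the example preceding the proposition is the construction in the basic case, and the decomposition into open cones and cosets reappears in the proof of Proposition \ref{P:simple_case} and of the measure-criterion theorem). But there is one genuine error: your claim that ``in the lower-dimensional case $r<n$ one gets a genuine measure, supported on a sub-lattice, with no denominators'' is false, and the proposition explicitly insists on covering that case. Take $n=2$, $C=C^o(v_1)$ a single ray, and $f'\otimes[U]$ the characteristic function of the standard lattice: the series is $\sum_{m\geq 1}q^{mv_1}=q^{v_1}/(1-q^{v_1})$, which has a pole and represents the pseudo-measure $\delta_{v_1}\ast(\delta_0-\delta_{v_1})^{-1}$, not a measure. (The Kubota--Leopoldt pseudo-measure itself arises from a ray.) The correct treatment of $r<n$ is the same computation as your step (3), not a degenerate case of it: one inverts the $r$ factors $\delta_0-\delta_{a_iv_i}$, $i=1,\ldots,r$, against the finite sum of Diracs over the $r$-dimensional half-open parallelepiped $\cP=\{\sum_{i=1}^r\lambda_iv_i:\ 0<\lambda_i\leq a_i\}$; this is exactly what appears in the paper's proof of Proposition \ref{P:simple_case}.

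Separately, your declared ``main obstacle''---subdividing $C$ into cones spanned by parts of a $\Z$-basis---is a detour you do not need, and dropping it removes the unproved toric lemma and the worry about uniform bounds on the $a_i$. The computation in your step (3) requires only that $v_1,\ldots,v_r$ be linearly independent and, after rescaling by positive rationals (which does not change $C$), lie in $L$; then $f'\otimes[U]$, being a finite $\Z$-linear combination of characteristic functions of affine lattices with $[U]$ periodic modulo $p^NL_p$, is periodic with respect to $\Z a_1v_1+\cdots+\Z a_rv_r$ for suitable positive rationals $a_i$ with $a_iv_i\in L_p\setminus\{0\}$. For mere existence of the pseudo-measure the $a_i$ need not be $p$-adic units: every $\delta_0-\delta_{a_iv_i}$ with $a_iv_i\neq 0$ is already invertible in $\widetilde{\cM}(L_p)$ by definition of the localization. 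Unitness of the $a_i$ only becomes relevant later, in the proof of Proposition \ref{P:simple_case}, where one needs $(1-(1+T_i)^{a_i})/T_i$ to be a unit in $\Z_p[[T_1,\ldots,T_n]]$.
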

For notational simplicity, we will often write $\mu_{f',C}$ instead of $\mu_{f',C,L_p}$.

\begin{prop}\label{P:zeta_values}
Keeping the same hypothesis on $C,f'$, suppose furthermore that $\mu_{C,f'}$ is a measure, and that $C\subset (V_\R)_+$. Then
\begin{align*}
	\int_{U} N^k(v)d\mu_{C,f'}=\text{ the value at $s=-k$ of the analytic continuation of}\\
		\zeta(f'\otimes [U],C;s)=\sum_{v\in C}  \frac{f'\otimes [U](v)}{N(v)^s}
\end{align*}
\end{prop}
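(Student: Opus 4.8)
\textbf{Proof plan for Proposition \ref{P:zeta_values}.}

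The plan is to reduce the statement to a computation in a single variable $u$, matching the two sides term by term via a specialization of the Amice transform, and then invoking Shintani's special-value formula (Theorem, equation (\ref{Shintaniresidue})). First I would decompose $C$ into its open simplicial pieces, $C = \coprod_j C_j^o$, and observe that both sides of the claimed identity are additive in $C$ (the left side because $\mu_{C,f'} = \sum_j \mu_{C_j^o,f'}$ by the uniqueness in Proposition \ref{P:pseudo-measure}, provided each piece gives a measure — one subtlety to address is that additivity of measures follows once the sum is a measure; the right side because the defining series for $\zeta_{Sh}$ and hence its continuation is additive in the cone). So I may assume $C = C^o(v_1,\ldots,v_r)$ with the $v_i \in (V_\R)_+ \cap V$. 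After rescaling the $v_i$ by $p$-adic units (as in \S3.2), $f = f'\otimes[U]$ is periodic with respect to $\Z v_1 + \cdots + \Z v_r$, so by Proposition \ref{P:pseudo-measure} the Amice transform of $\mu_{C,f'}$ equals the rational function
\begin{equation*}
	\cA(\mu_{C,f'})(q) = \frac{1}{1-q^{v_1}}\cdots\frac{1}{1-q^{v_r}}\sum_{v\in\cP} f(v)\, q^v
\end{equation*}
near $q = 0$, where $\cP$ is the half-open parallelepiped.

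The key step is to extract $\int_U N^k\, d\mu$ from this Amice transform. The idea is that $N^k$ is a polynomial of degree $k$ in each coordinate, so $\int_U N^k\, d\mu$ is computed from the partial derivatives $\frac{\partial^{nk}}{\partial^k x_1\cdots\partial^k x_n}$ acting on $\int_U q^x\, d\mu$; concretely, substituting $q_i = e^{t_i}$ turns $\cA(\mu)(e^{t_1},\ldots,e^{t_n})$ into the exponential generating function $\sum_j \int_U \binom{x}{j} d\mu \prod_i (e^{t_i}-1)^{j_i} = \int_U \prod_i e^{x_i t_i} d\mu$, whose coefficient of $t_1^k\cdots t_n^k$ is $\frac{1}{k!^n}\int_U N^k\, d\mu$ when that integral exists (i.e. when $\mu$ is genuinely a measure, which is our hypothesis). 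On the other hand, under $q_i = e^{t_i}$ the rational function above becomes exactly (up to the sign change $t \mapsto -t$ appearing in the definition of $G$) the function $G(t_1,\ldots,t_n)$ from \S3.2 attached to $f$ and $C$. Here one must be a little careful: $G$ is not holomorphic at the origin in general, so I would follow the route through equation (\ref{Shintaniresidue}) rather than the simplified (\ref{simpleShintani}) — that is, use the full formula with $\Coeff(G(ux_1,\ldots,ux_n)\mid_{x_i=1}; u^{nk}x_2^k\cdots x_n^k)$ summed over $i$, together with the symmetry of $G$ in its arguments — to identify $\frac{1}{k!^n}$ times the relevant coefficient with $\zeta_{Sh}(f,C;-k)$.

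Putting these together: both $\int_U N^k\, d\mu_{C,f'}$ and $\zeta_{Sh}(f'\otimes[U],C;-k)$ are expressed as the same coefficient of the same power series, hence they agree. The main obstacle I anticipate is the bookkeeping around the non-holomorphy of $G$ at the origin and the precise form of Shintani's formula (\ref{Shintaniresidue}): one needs to check that the "coefficient of $t_1^k\cdots t_n^k$" operation applied to $\int_U e^{x\cdot t} d\mu$ really does pick out $\frac{1}{k!^n}\int_U N^k d\mu$ even when $G$ has poles along the hyperplanes $v_i\cdot t = 0$, which is legitimate precisely because the hypothesis that $\mu_{C,f'}$ is a \emph{measure} forces the combination of these polar terms to have a genuine Taylor expansion near $0$ (this is the $p$-adic shadow of the cancellation of poles in Shintani's argument). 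A secondary, more routine point is justifying the substitution $q_i = e^{t_i}$: since $\cA(\mu)$ is rigid analytic on $\cB$ and converges in a neighborhood of $q = 0$, and $t \mapsto e^t$ is a formal/analytic change of variable near the appropriate point, the identity of formal power series in $t$ is all that is needed, and the comparison with the archimedean computation of \S3.2 is an identity of formal power series by construction.
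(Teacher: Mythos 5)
Your overall route (Amice transform, substitution $q_i = e^{t_i}$, identification with the generating function $G$ of \S 3.2, and Shintani's special-value formula) is the paper's route, but your opening reduction is a genuine misstep. You propose to decompose $C = \coprod_j C_j^o$ and prove the identity piece by piece, flagging but not resolving the ``subtlety'' that the individual $\mu_{C_j^o,f'}$ need only be pseudo-measures. That subtlety is fatal to the reduction: the left-hand side $\int_U N^k\,d\mu_{C_j^o,f'}$ is simply undefined when $\mu_{C_j^o,f'}$ is not a measure (a pseudo-measure has no moments), so there is nothing to match term by term. The same problem forces your detour through the full formula (\ref{Shintaniresidue}): you would then need to interpret the coefficient extraction $\Coeff(G_j(ux_1,\ldots,ux_n)\mid_{x_i=1};u^{nk}x_2^k\cdots x_n^k)$ as a ``moment'' of a pseudo-measure, which is not something the paper's framework provides and which you do not supply.

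The fix — which you in fact articulate in your final paragraph but do not let override the earlier plan — is to do no decomposition on the $p$-adic side at all. Since $\mu_{C,f'}$ is a measure by hypothesis, $\cA(\mu_{C,f'})$ lies in $\Z_p[[q_1-1,\ldots,q_n-1]]\otimes\Q_p$ and is in particular holomorphic at $q=1$; hence after $q_i=e^{x_i}$ the \emph{total} $G$ is holomorphic at the origin (the polar hyperplanes of the individual pieces cancel), the moment $\int_U N^k\,d\mu$ is literally $D_N^k\cA(\mu)\mid_{q=1}=\frac{\partial^{nk}}{\partial^k x_1\cdots\partial^k x_n}G\mid_{x=0}$, and Shintani's formula in its simplified form (\ref{simpleShintani}) applies directly to give $\zeta_{SH}(f'\otimes[U],C;-k)$. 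The decomposition into open cones is only needed on the archimedean side, where each piece has a well-defined analytic continuation and (\ref{Shintaniresidue}) is additive, so the holomorphy of the sum legitimately collapses the full formula to (\ref{simpleShintani}). Reordered this way, your argument closes; as written, the reduction in your first step cannot be carried out.
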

\begin{proof}
Fix $C,f'$ as above and write $\mu$ for the measure $\mu_{C,f'}$. Let $D_N: A_{\Q_p}(\cB)\rightarrow A_{\Q_p}(\cB)$ be the (invariant?) differential operator $D_Nq^v=N(v)q^v$. After rearranging a uniformly convergent series, we have
\begin{equation*}
	\int_U N^k(v)d\mu = \left.\left(\int_U D_N^kq^v d\mu\right)\right|_{q=1}= D_N^k\cA(\mu)\mid_{q=1}.
\end{equation*}
We make the change of variables $q_i=e^{x_i}$, so $q^v=e^{e_1^*(v) x_1+\cdots e_n^*(v)x_n}$. Under this change of variables, $D_N^k=\frac{\partial^{nk}}{\partial^k x_1 \cdots \partial^k x_n}$ and $\cA(\mu)$ becomes the function  $G(x_1,\ldots,x_n)$ represented by
\begin{equation}
\sum_{v\in C} f'\otimes[U] e^{e^*_1(v)x_1+\cdots+ e_n^*(v)x_n}.
\end{equation}
Since $\cA(\mu)$ is holomorphic at $q=1$, $G$ is holomorphic at $x_1,\ldots,x_n=0$. Shintani's theorem then implies
\begin{equation*}
	 D_N^k\cA(\mu)\mid_{q=1}=\frac{\partial^{nk}}{\partial^k x_1 \cdots \partial^k x_n}G(x_1,\ldots,x_n)|_{x_1,\ldots,x_n=0}=\zeta_{SH}(f'\otimes[U],C;-k).
\end{equation*}
\end{proof}

\subsection{Measure criteria}
In light of Proposition \ref{P:zeta_values} it is natural to ask, ``when is $\mu_{C,f'}$ a measure?" The main result of this section, and the technical heart of the paper, is an exact criterion for $\mu_{C,f'}$ to be a measure. Roughly speaking, this happens whenever the test function $f'$ has vanishing average in the directions of the extremal rays of $C$. To make precise this vague statement, we introduce some notation.

For each non-zero $w\in V$ and any $v\in V$, write $\pi_{v,w}:\cS(V_\ell)\rightarrow \cS(\Q_\ell)$ for the map which sends a test function $f\in\cS(V_\ell)$ to the function $\pi_{v,w}f:\Q_\ell\rightarrow \Z$ 
\begin{equation}
	(\pi_{v,w}f)(x):=f(v+x w ), \text{ for all } x\in\Q_\ell.
\end{equation}
A fortiori, $\pi_{v,w}f$ is indeed a test function on $\Q_\ell$. Similarly, we define $\pi_{v,w}:\cS(V^{(p)})\rightarrow\cS(\Q^{(p)})$ and $\pi_{v,w}:\cS(V)\rightarrow\cS(\Q)$.

The Haar measure on $\cS(V)$, normalized with respect to $L$, can be defined in two equivalent ways. First, for each $f\in\cS(V)$, there exist a lattice $L_f$ for which $f$ is periodic: $\forall\ell\in L_f$ and $v\in V$, $f(v+\ell)=f(v)$. One can define (see \cite{Ste89}) the global Haar measure $h_V$ by putting
\begin{equation}\label{H1}
	h_V(f):=\frac{1}{[L:L_f]}\sum_{v\in V/L_f} f(v).
\end{equation}
Since $f$ has bounded supported, the sum is finite, and it's easy to see that it is independent of choice of $L_f$. 

On the other hand, we have at each local component $V_\ell$, a local Haar measure $h_\ell$ normalized so that $h_\ell([L_\ell])=1$. Given $f=\bigotimes_{\ell} f_\ell \in\cS(V)$,  we can also define 
\begin{equation}\label{H2}
	h'_V(f):=\prod_{\ell} h_\ell(f_\ell)
\end{equation}
and extend to all of $\cS(V)$ by linearity.
\begin{lem}
The measures \ref{H1} and \ref{H2} both define the same Haar measure on $V$.
\end{lem}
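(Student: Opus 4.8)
The plan is to show that the two definitions agree by reducing to the case of a pure tensor $f = \bigotimes_\ell f_\ell$, since both $h_V$ and $h'_V$ are defined to be $\Z$-linear and $\cS(V)$ is spanned by such pure tensors (indeed by characteristic functions of affine lattices, by the Lemma in Section 2). So fix $f = \bigotimes_\ell f_\ell$ with $f_\ell = [L_\ell]$ for all $\ell$ outside a finite set $T$ of primes.

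First I would choose a convenient common period lattice. For each $\ell \in T$, the function $f_\ell$ is locally constant with compact support, hence periodic modulo $p_\ell^{m_\ell} L_\ell$ for some integer $m_\ell \geq 0$ (and also supported inside $p_\ell^{-N_\ell} L_\ell$ for some $N_\ell$); for $\ell \notin T$ we have $f_\ell = [L_\ell]$, which is periodic modulo $L_\ell$ itself. Set $M := \bigcap_\ell M_\ell$ where $M_\ell = p_\ell^{m_\ell} L_\ell$ for $\ell \in T$ and $M_\ell = L_\ell$ otherwise; concretely $M = N L$ for a suitable positive integer $N$ supported on $T$. Then $f$ is periodic modulo $M$, so we may use $L_f = M$ in formula \eqref{H1}. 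The index is $[L:M] = \prod_{\ell \in T} [L_\ell : p_\ell^{m_\ell} L_\ell] = \prod_{\ell \in T} \ell^{\, m_\ell \cdot \dim V}$.

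Next I would compute both sides. For the right-hand side \eqref{H2}: at each $\ell \in T$, since $f_\ell$ is $M_\ell$-periodic and supported in finitely many $M_\ell$-cosets, $h_\ell(f_\ell) = [L_\ell : M_\ell]^{-1} \sum_{v \in L_\ell/M_\ell} f_\ell(v)$ — one checks this from the normalization $h_\ell([L_\ell]) = 1$ together with $[L_\ell] = \sum_{v \in L_\ell/M_\ell} [v + M_\ell]$ and translation-invariance of $h_\ell$, so $h_\ell([v+M_\ell]) = [L_\ell:M_\ell]^{-1}$. For $\ell \notin T$, $h_\ell(f_\ell) = h_\ell([L_\ell]) = 1$. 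Hence $h'_V(f) = \prod_{\ell \in T} [L_\ell:M_\ell]^{-1} \sum_{v \in L_\ell/M_\ell} f_\ell(v)$. For the left-hand side, I would use the arithmetic fact that $V/M \cong \prod_\ell L_\ell/M_\ell$ (finite, nontrivial only at $\ell \in T$) via the adelic description, under which the diagonal value $f(v)$ factors as $\prod_\ell f_\ell(v_\ell)$. Then the sum over $V/M$ factors as a product of sums over the $L_\ell/M_\ell$, giving $h_V(f) = [L:M]^{-1} \prod_{\ell \in T} \sum_{v \in L_\ell/M_\ell} f_\ell(v)$, and since $[L:M] = \prod_{\ell \in T}[L_\ell:M_\ell]$ this matches $h'_V(f)$ term by term.

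The main obstacle — really the only nontrivial point — is setting up the identification $V/NL \cong \prod_{\ell \mid N} L_\ell / N L_\ell$ cleanly and checking that the value of the global test function $f$ at a class in $V/NL$ is the product of the local values $f_\ell$ at the corresponding components; this is the compatibility between the "global/periodic" picture and the "adelic/restricted tensor" picture of $\cS(V)$, and it is essentially the Chinese Remainder Theorem for the lattice $L$ together with unwinding the definition $\cS(V) = \bigotimes'_\ell \cS(V_\ell)$. Once that bookkeeping is in place, everything else is a direct index computation, and independence of the choices ($m_\ell$, $N_\ell$, the period lattice) follows because enlarging the finite set $T$ or refining $M$ only introduces matching factors of $\ell^{\dim V}$ in the index and in the coset sums, which cancel — exactly as in the already-noted independence of $L_f$ in \eqref{H1}.
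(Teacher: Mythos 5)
The paper does not actually prove this lemma---the ``proof'' is just a citation to \cite{Ste89}, \S 3.3---so there is no internal argument to compare yours against; your write-up supplies the missing standard argument, and its structure is correct: reduce to pure tensors $f=\bigotimes_\ell f_\ell$ by $\Z$-linearity of both sides, choose a common period lattice $M=NL$, compute $h_\ell(f_\ell)$ from translation invariance and the normalization $h_\ell([L_\ell])=1$, and match the global coset sum with the product of local coset sums via the Chinese Remainder Theorem identification. The one imprecision worth fixing is in the index sets of your sums: you sum over $L_\ell/M_\ell$ locally and assert that the global sum over $V/M$ factors through $\prod_\ell L_\ell/M_\ell$, but the support of $f_\ell$ need not lie in $L_\ell$ (as you yourself note, it lies only in $\ell^{-N_\ell}L_\ell$), and correspondingly the support of $f$ need not lie in $L$. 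The correct statement is that $h_\ell(f_\ell)=[L_\ell:M_\ell]^{-1}\sum_{v}f_\ell(v)$ with $v$ ranging over \emph{all} cosets of $M_\ell$ meeting the support, i.e.\ over $\ell^{-N_\ell}L_\ell/M_\ell$ (this uses $h_\ell([v+M_\ell])=[L_\ell:M_\ell]^{-1}$ for arbitrary $v\in V_\ell$, which translation invariance does give you), and the global sum over $V/M$ is supported on $L'/M$ for a suitable lattice $L'\supseteq L$, with $L'/M\cong\prod_\ell L'_\ell/M_\ell$. Since the normalizing factor $[L:M]^{-1}=\prod_{\ell\in T}[L_\ell:M_\ell]^{-1}$ is unchanged by enlarging the summation set, the term-by-term match goes through verbatim with the corrected index sets, and the proof is then complete.
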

\begin{proof}
See \cite{Ste89}, section 3.3.
\end{proof}
We define the Haar measure of test functions away from $p$ by defining $h^{(p)}(f'):=h_{V}(f'\otimes[L_p])$. If $f'$ is factorizable (i.e. $f'=\bigotimes_{\ell} f_\ell \in\cS(V))$, then $h^{(p)}(f')=\prod_{\ell\neq p} h_\ell(f')$.

\begin{defn}[{\bf Vanishing Hypothesis}]
Let $w\in V$ be a non-zero vector. We will say a test function $f'$ \emph{satisfies the {Vanishing Hypothesis} for $w$} if $h^{(p)}(\pi_{v,w} f')=0$ for all $v\in V$.
\end{defn}

We remark that $h^{(p)}(\pi_{v,w}f')$ depends only on $v \mod \langle w\rangle$ by the translation invariance of Haar. 
\begin{lem}\label{L:Haar_vanishing}
If a test function $f'\in\cS(V^{(p)})$ satisfies the vanishing hypothesis for $w$, then for all $U\subset V_p$ and $v\in V$,
\begin{equation}
	h_\Q( \pi_{v,w}(f'\otimes[U]))=0.
\end{equation}
\end{lem}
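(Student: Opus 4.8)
The plan is to reduce the claimed identity $h_\Q(\pi_{v,w}(f'\otimes[U]))=0$ to the hypothesis $h^{(p)}(\pi_{v,w}f')=0$ by exploiting the factorization of the global Haar measure over places, using the equality of the two descriptions \ref{H1} and \ref{H2}. First I would observe that since everything in sight is linear in $f'$ and in $[U]$, I may assume $f'$ is factorizable, $f'=\bigotimes_{\ell\neq p} f_\ell$; the vanishing hypothesis is not obviously preserved under this reduction term-by-term, so instead I would keep $f'$ general but factor only the $p$-component: $f'\otimes[U]$ has local components $f_\ell$ at $\ell\neq p$ (collectively "$f'$") and $[U]$ at $p$. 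The key point is that the operator $\pi_{v,w}$ acts place-by-place: for $x=(x_\ell)\in\bA_\Q^{(\infty)}$ one has $(\pi_{v,w}(f'\otimes[U]))(x)=\pi_{v,w}f'((x_\ell)_{\ell\neq p})\cdot (\pi_{v,w}[U])(x_p)$, so the tensor structure is respected and $\pi_{v,w}(f'\otimes[U])=(\pi_{v,w}f')\otimes(\pi_{v,w}[U])$ as a test function on $\Q=\bigotimes_\ell'\Q_\ell$.

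Next I would apply the product formula \ref{H2} for the global Haar measure on $\cS(\Q)$, which is legitimate after the factorization above (and after extending by linearity in the $\ell\neq p$ slots):
\begin{equation*}
	h_\Q(\pi_{v,w}(f'\otimes[U])) = \left(\prod_{\ell\neq p} h_\ell\big((\pi_{v,w}f')_\ell\big)\right)\cdot h_p(\pi_{v,w}[U]) = h^{(p)}(\pi_{v,w}f')\cdot h_p(\pi_{v,w}[U]).
\end{equation*}
Here $h^{(p)}(\pi_{v,w}f')$ is exactly the quantity in the Vanishing Hypothesis — using $h^{(p)}(g')=h_\Q(g'\otimes[\Z_p])$ and that $\pi_{v,w}$ commutes with adjoining the trivial $p$-component $[\Z_p]$, together with $h_p([\Z_p])=1$. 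Since $\pi_{v,w}[U]$ is a genuine test function on $\Q_p$ (a finite sum of characteristic functions of cosets, as noted in the text), $h_p(\pi_{v,w}[U])$ is a finite rational number, and the hypothesis $h^{(p)}(\pi_{v,w}f')=0$ forces the whole product to vanish.

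The one subtlety I would be careful about — and I expect this to be the main obstacle in writing it cleanly — is the passage between the factorizable and general case for $f'$, and making sure the identity $\pi_{v,w}(f'\otimes[U])=(\pi_{v,w}f')\otimes(\pi_{v,w}[U])$ is stated at the right level of generality (i.e. that $\pi_{v,w}$ really is "place-diagonal" on the restricted tensor product, including the compatibility with the conditions $f_\ell=[L_\ell]$ almost everywhere, which holds because $\pi_{v,w}[L_\ell]=[L_\ell\cap(\text{affine line})]$ is still the standard local test function for almost all $\ell$). Once that bookkeeping is in place, the rest is just the product formula for Haar plus linearity, so the argument is short. I would also remark, as the text already does, that $h^{(p)}(\pi_{v,w}f')$ depends only on $v$ mod $\langle w\rangle$, so there is no issue with the choice of coset representative $v$.
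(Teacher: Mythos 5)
Your proof is correct and is essentially the paper's own argument: the paper likewise factors $\pi_{v,w}(f'\otimes[U])=(\pi_{v,w}f')\otimes(\pi_{v,w}[U])$ (writing $\pi_{v,w}[U]=[W]$ for the appropriate compact open $W\subset\Q_p$) and then applies the product form of the Haar measure to get $h_\Q(\pi_{v,w}(f'\otimes[U]))=h^{(p)}(\pi_{v,w}f')\,h_p([W])=0$. The bookkeeping you flag about $\pi_{v,w}$ being place-diagonal on the restricted tensor product is handled implicitly in the paper and does not change the argument.
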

\begin{proof}
The vectors $v,w$ embed $\Q\hookrightarrow V$ and $\Q_p\hookrightarrow V_p$ via the inclusion $\lambda\mapsto v+\lambda w$. Let $W\subset \Q_p$ denote the projection of $U$ to the line $v+\lambda w$. Then $\pi_{v,w}(f'\otimes[U])=(\pi_{v,w}f' )\otimes (\pi_{v,w}[U])=(\pi_{v,w} f' )\otimes[W]$, and $h_V(\pi_{v,w} (f'\otimes[U]))=h^{(p)}(\pi_{v,w}f')h_p([W])=0$.
\end{proof}
One may interpret this lemma as saying a test function $f'$ satisfies the vanishing hypothesis for $w$ if the average value of $f'\otimes f_p$ is $0$ along all lines parallel to $w$, for all $f_p\in\cS(V_p)$. While this hypothesis may seem odd, it is in fact easy to verify in important cases. Indeed, we will show that the vanishing hypothesis is satisfied when $f'$ comes from the data of a ``smoothed" ray class zeta function of a totally real field. In the next section, we show how the construction of $p$-adic $L$-functions of totally real fields is a corollary of our main theorem.

We begin with a special case of the criteria,  from which we shall deduce the full result. Let $v_1,\ldots,v_r$ be linearly independent vectors in $L_p$, and put $C=C^o(v_1,\ldots,v_r)$. Extending $v_1,\ldots,v_r$ to a $\Q_p$-basis by $v_r,\ldots,v_n\in L_p$, put $U=\Z_pv_1+\cdots \Z_p v_r$. For each $v_0\in V$, consider the pseudo-measure $\mu_{f',C,v_0+U}\in\widetilde{\cM}(L_p)$. 

\begin{prop}\label{P:simple_case}
With $C=C^o(v_1,\ldots,v_r)$, a test function $f'\in\cS(V^{(p)})$ satisfies the vanishing hypothesis for $v_1,\ldots,v_r$ if and only if $\mu_{f',C,v_0+U}$ is a measure. 
\end{prop}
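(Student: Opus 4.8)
The plan is to compute the Amice transform $\cA(\mu_{f',C,v_0+U})$ explicitly near $(q_1,\dots,q_n)=(0,\dots,0)$ and read off exactly when it extends to a \emph{bounded} power series in $q_1-1,\dots,q_n-1$ (equivalently, when the pseudo-measure is a genuine measure). The key observation is the factorization used in \S4.3: after rescaling $v_1,\dots,v_r$ by $p$-adic units $a_1,\dots,a_r$ so that $f'\otimes[v_0+U]$ is periodic with respect to $\Z a_1v_1+\cdots+\Z a_rv_r$, we get
\begin{equation*}
	\cA(\mu_{f',C,v_0+U})(q) = \left(\prod_{i=1}^r \frac{1}{1-q^{a_iv_i}}\right)\sum_{v\in\cP} (f'\otimes[v_0+U])(v)\, q^v,
\end{equation*}
where $\cP$ is the half-open parallelepiped on $a_1v_1,\dots,a_rv_r$ (translated into $v_0+U$). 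Writing $q_i=e^{x_i}$, each factor $\frac{1}{1-q^{a_iv_i}}=\frac{1}{1-e^{a_iv_i\cdot x}}$ has a simple pole along the hyperplane $a_iv_i\cdot x=0$, i.e. $v_i\cdot x=0$, with residue/leading term $-\frac{1}{a_i v_i\cdot x}$. Since $\cM(\Z_p^n)\cong\Z_p[[q-1,\dots]]\otimes\Q_p$ is exactly the ring of bounded (= holomorphic on the open polydisk) power series, $\mu$ is a measure iff $\cA(\mu)$ is holomorphic at $q=1$, iff the numerator $N(q):=\sum_{v\in\cP}(f'\otimes[v_0+U])(v)q^v$ vanishes, to first order, along each of the $r$ hyperplanes $q^{a_iv_i}=1$ (since the poles are simple and the hyperplanes are distinct, vanishing along each one separately suffices to cancel the whole product of poles — this is where one uses that $v_1,\dots,v_r$ are linearly independent).

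Next I would identify ``$N(q)$ vanishes on the hyperplane $q^{a_iv_i}=1$'' with the Vanishing Hypothesis for $v_i$. Restricting $q$ to that hyperplane amounts to summing the coefficients of $N$ over each coset of $\Z a_iv_i$ inside $\cP$; by the periodicity of $f'\otimes[v_0+U]$ modulo $\Z a_1v_1+\cdots+\Z a_rv_r$, this coset sum along the $v_i$-direction is, up to the nonzero normalizing constant coming from $[v_0+U]$ at $p$ and the index $[L:L_f]$, precisely $h_\Q(\pi_{v',v_i}(f'\otimes[v_0+U]))$ for $v'$ running over a set of representatives transverse to $v_i$. By Lemma \ref{L:Haar_vanishing}, these all vanish when $f'$ satisfies the Vanishing Hypothesis for $v_i$; conversely, if some $h_\Q(\pi_{v',v_i}(f'\otimes[v_0+U]))\neq0$ then $N$ does not vanish identically on $q^{a_iv_i}=1$ while the numerators $\prod_{j\neq i}(1-q^{a_jv_j})$ do not vanish there generically, so $\cA(\mu)$ has a genuine simple pole and $\mu$ is not a measure. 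One subtlety to handle carefully: when $r<n$ the extra coordinates $v_{r+1},\dots,v_n$ are ``free'', so $N(q)$ and the pole factors involve only $q^{v_1},\dots,q^{v_r}$, and the argument takes place in the corresponding $r$-dimensional subdisk; I would phrase the holomorphy criterion so that it is insensitive to the free directions. Also, I should note that the Vanishing Hypothesis for $v_i$ (a condition purely on $f'$, independent of $v_0$ and $U$) matches the coset-sum condition for \emph{every} choice of $v_0$ and $U$ by Lemma \ref{L:Haar_vanishing}, which is why the statement can universally quantify over $v_0$.

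The main obstacle I anticipate is the bookkeeping in the converse direction: showing that a single nonvanishing Haar average genuinely produces a pole that cannot be cancelled. One must be sure that when restricting the numerator $N(q)$ to $q^{a_iv_i}=1$, the other pole factors $1-q^{a_jv_j}$ ($j\neq i$) are not themselves zero on a dense subset of that hyperplane — this is exactly the linear independence of $v_1,\dots,v_r$, which guarantees the hyperplanes $\{q^{a_iv_i}=1\}$ are distinct irreducible divisors, so a pole along one is not masked by a zero coming from another. A clean way to make this rigorous is to pass to the $x$-coordinates, expand $\cA(\mu)$ as a Laurent series and extract the coefficient of $\prod_{i}(v_i\cdot x)^{-1}$ (or of $(v_i\cdot x)^{-1}$ with the other $v_j\cdot x$ set generically), identifying it with the product of Haar averages; then $\mu$ is a measure iff this ``principal part'' vanishes iff each factor vanishes iff the Vanishing Hypothesis holds for each $v_i$. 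I would also double-check the normalization constants ($p$-adic units $a_i$, the factor $h_p([W])$, the index $[L:L_f]$) are all nonzero so that vanishing of the Haar average is equivalent to vanishing of the coset sum.
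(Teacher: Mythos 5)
Your proposal is correct and follows essentially the same route as the paper: factor $\cA(\mu_{f',C,v_0+U})$ as $\prod_i(1-q^{a_iv_i})^{-1}$ times a finite sum over the parallelepiped $\cP$, and identify the restriction of that numerator to each hyperplane with the Haar averages $h_\Q(\pi_{v,v_i}(f'\otimes[U]))$ via Lemma \ref{L:Haar_vanishing}. The paper disposes of your worry about pole cancellation by substituting $(1+T_i)=q^{v_i}$, so that $(1-(1+T_i)^{a_i})/T_i$ is a unit in $\Z_p[[T_1,\ldots,T_n]]$ and the measure criterion becomes the formal divisibility $T_1\cdots T_r\mid F$, which splits into the separate conditions $T_i\mid F$ exactly as you anticipate in your final paragraph.
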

\begin{proof}
We only record the proof of the case $v_0=0$. The general case is virtually identical, modulo a few change of variables that we will indicate following the proof.

We know, from our construction of $\mu_{f',C,U}$, that
\begin{align}
	\cA(\mu_{f',C,U})=\sum_{v\in C} f'\otimes [U](v) q^v\label{E:at_zero}
\end{align}
in a neighborhood of $q=0$.
To realize this as a rational function, note that the test function $f'\otimes[U]$ is locally constant. That is to say, there exist $a_1,\ldots,a_r$ so that $f'\otimes[U]$ is periodic with respect to $a_1v_1,\ldots,a_rv_r$. Since $[U]$ is periodic with respect to $v_1,\ldots,v_r\in U$, we may take $a_1,\ldots,a_r$ to be $p$-adic units. Put
\begin{equation*}
\cP=\{ \sum_{i=1}^r \lambda_i v_r :\lambda_i\in\Q, 0<\lambda_i\leq a_i\}.
\end{equation*} 
For each $v\in C$, there exists a unique $v\in\cP$ and positive integers $n_1,\ldots,n_r$ such that $v=w+n_1a_1v_1+\cdots n_r a_r v_r$. Therefore the power series (\ref{E:at_zero}) represents the rational function (on $\cB$)
\begin{align*}
	\frac{1}{1-q^{a_1v_1}}\cdots\frac{1}{1-q^{a_rv_r}}\sum_{v\in \cP} f'\otimes[U](v) q^v,
\end{align*}
which we claim is holomorphic on $\cB$ if and only if $f'$ satisfies the vanishing hypothesis. To see this, we change coordinates, putting $(1+T_i)=q^{v_i}$ for $i\in \{1,\ldots, n\}$. The function becomes
\begin{equation}\label{E:main_term}
	\frac{1}{1-(1+T_1)^{a_1}}\cdots\frac{1}{1-(1+T_r)^{a_r}} \sum_{ v\in\cP} f'\otimes[U](v) (1+T_1)^{v_1^*(v)}\cdots(1+T_n)^{v_n^*(v)}.
\end{equation}
Since $a_1,\ldots,a_r$ are $p$-adic units, $(1-(1+T_i)^{a_i})/T_i$ is a unit in $\Z_p[[T_1,\ldots,T_n]]$. Thus, this function is holomorphic if and only if $T_1\cdots T_r$ divides 
\begin{equation}
	F(T_1,\ldots,T_n):=\sum_{ v\in\cP} f'\otimes[U](v) (1+T_1)^{v_1^*(v)}\cdots(1+T_n)^{v_n^*(v)},
\end{equation}
which is equivalent to each of $T_1,\ldots,T_r$ dividing $F$. 

~

\noindent{\bf Claim:} For each $i\in 1,\ldots, r$,  $T_i|F$ if and only if $f'$ satisfies the vanishing hypothesis for $v_i$.

~

For notational simplicity, we focus on the case $i=1$. It is easy to see that $T_1$ divides $F(T_1,\ldots,T_r)\in\Z_p[[T_1,\ldots,T_n]]$ if and only if $F(0,T_2,\ldots,T_n)=0$, i.e.
\begin{equation}
	F(0,T_2,\ldots,T_n)=\sum_{ v\in\cP} f'\otimes[U](v) (1)^{v_1^*(v)}\cdots(1+T_n)^{v_n^*(v)}=0.
\end{equation}
We carefully rearrange the sum as
\begin{equation}
	F(0,T_2,\ldots,T_n)=\sum_{v\in\cP\cap v_1^\perp}\left(\sum_{x\in(0,a_1]} f'\otimes[U](v+ xv_1)\right)(1+T_2)^{v_2^*(v)}\cdots(1+T_n)^{v_n^*(v)}.
\end{equation}
The coefficient in the parenthesis can be rewritten as
\begin{equation}\label{E:haar}
\sum_{x\in(0,a_1]} f'\otimes[U](v+ xv_1)=a_1 \frac{1}{[\Z:a_1\Z]} \sum_{x\in(0,a_1]} f'\otimes[U](v+ xv_1)=a_1 h_\Q(\pi_{v,v_1}(f'\otimes[U])).
\end{equation}
This shows that $F(0,T_2,\ldots,T_n)=0$ if and only if $h_\Q(\pi_{v,v_1}f'\otimes[U]) =0$ for all $v\in V$. Lemma \ref{L:Haar_vanishing} tells this is equivalent to the vanishing hypothesis for $v_1$, so $T_1$ divides $F$ if and only if $f'$ satisfies the vanishing hypothesis for $v_1$.. Similarly, we conclude $T_i$ divides $F$ if and only if $f'$ satisfies the vanishing hypothesis for $v_i$ giving the result.

More generally,
\begin{equation}
	\cA(\mu_{f',C,v_0+U})=\sum_{v\in C} f'\otimes[v_0+U](v)q^v=\sum_{v\in -v_0+C}f'(v_0+v)[U](v)q^{v_0+v}
\end{equation}
The vanishing hypothesis is translation invariant, so we may replace $f'(-v_0+v)$ with $f'(v)$. Thus we are reduced to showing
\begin{equation}
	q^{v_0}\sum_{v\in -v_0+C} f'\otimes[U](v) q^v
\end{equation}
represents a holomorphic function on $\cB$ when $f'$ satisfies the vanishing hypotheses for $v_1,\ldots,v_r$. Ignoring the $q^{v_0}$ term, this follows from the above argument, with $\cP$ replaced by $-v_0+\cP$. 
\end{proof}

\begin{thm}
Let $C$ be a simplicial cone with extremal rays $v_1,\ldots,v_r$. The pseudo-measure $\mu_{f',C}\in\widetilde{\cM}(L_p)$ is a measure if $f'$ satisfies the vanishing hypothesis for $v_1,\ldots,v_r$.
\end{thm}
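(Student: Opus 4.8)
The plan is to reduce the general simplicial cone to the special case already handled in Proposition \ref{P:simple_case}. Recall that a general simplicial cone $C$ with extremal rays $v_1,\ldots,v_r$ decomposes as a finite disjoint union of open cones, each of the form $C^o(v_{i_1},\ldots,v_{i_s})$ for some subset $\{i_1,\ldots,i_s\}\subseteq\{1,\ldots,r\}$ — these are the relatively open faces of the cone (or of a triangulation of it, if $C$ is not itself of the form $C(v_1,\ldots,v_r)$). The key structural fact is that the assignment $C\mapsto\mu_{f',C,U}$ is \emph{additive} on disjoint unions of cones: since $\cA(\mu_{f',C,U})(q)=\sum_{v\in C}f'\otimes[U](v)\,q^v$ as a formal/analytic identity near $q=0$, and the Amice transform is a ring homomorphism (injective on $\widetilde{\cM}(L_p)$), a disjoint decomposition $C=\coprod_j C_j$ gives $\mu_{f',C,U}=\sum_j\mu_{f',C_j,U}$ in $\widetilde{\cM}(L_p)$. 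So it suffices to show each $\mu_{f',C^o(v_{i_1},\ldots,v_{i_s}),U}$ is a measure (a finite sum of measures is a measure).

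First I would fix such a sub-cone $C'=C^o(v_{i_1},\ldots,v_{i_s})$ and note that its extremal rays $v_{i_1},\ldots,v_{i_s}$ form a subset of $\{v_1,\ldots,v_r\}$; since $f'$ satisfies the vanishing hypothesis for all of $v_1,\ldots,v_r$, it satisfies it in particular for $v_{i_1},\ldots,v_{i_s}$. Now the one remaining gap relative to Proposition \ref{P:simple_case} is that there the lattice was taken to be precisely $U=\Z_p v_{i_1}+\cdots+\Z_p v_{i_s}$ (extended by a complementary $\Z_p$-basis), whereas here $U$ is an arbitrary compact open in $L_p$. To bridge this, I would write the arbitrary compact open $U$ as a finite disjoint union of translates $v_0^{(m)}+U_0$ of a single sufficiently small lattice $U_0$ of the form $\Z_p(p^N v_{i_1})+\cdots$, and use additivity in the $U$-variable (again immediate from the formula for $\cA$) together with the translation-and-rescaling invariance already exploited in the proof of Proposition \ref{P:simple_case}: rescaling $v_{i_j}\mapsto p^N v_{i_j}$ only multiplies the test-function periods by $p$-adic units and does not affect the vanishing hypothesis. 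Thus each $\mu_{f',C',v_0^{(m)}+U_0}$ is covered by Proposition \ref{P:simple_case}, hence is a measure, hence so is $\mu_{f',C',U}$, hence so is $\mu_{f',C,U}$, and in particular $\mu_{f',C}=\mu_{f',C,L_p}$.

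The main obstacle I anticipate is purely bookkeeping rather than conceptual: making the decomposition of $C$ into relatively open rational simplicial sub-cones genuinely disjoint and compatible with a common lattice, and simultaneously decomposing the compact open $U$ so that on each piece one is exactly in the situation of Proposition \ref{P:simple_case} (same spanning vectors, up to $p$-adic unit rescaling). One must be slightly careful that lower-dimensional faces $C^o(v_{i_1},\ldots,v_{i_s})$ with $s<r$ are handled — but these are exactly the ``$r<n$'' cases that Proposition \ref{P:pseudo-measure} and Proposition \ref{P:simple_case} were explicitly stated to allow, so no new input is needed. Everything else — additivity of $\mu_{f',\cdot,\cdot}$ in both the cone and the compact-open arguments, injectivity of $\cA$, translation invariance of the vanishing hypothesis — has already been established in the excerpt.
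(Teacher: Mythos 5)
Your proposal is correct and takes essentially the same route as the paper: the paper likewise decomposes $C$ into open cones generated by subsets of $v_1,\ldots,v_r$, extends (rescaled, integral) generators to a $\Z_p$-lattice $U$ of finite index in $L_p$, writes $[L_p]=\sum_{x\in L_p/U}[x+U]$, and applies Proposition \ref{P:simple_case} to each translate. The only quibble is your phrase that $v_{i_j}\mapsto p^Nv_{i_j}$ multiplies periods by $p$-adic units ($p^N$ is not a unit), but this is harmless since Proposition \ref{P:simple_case} applies directly to the rescaled generators and their $\Z_p$-span.
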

\begin{proof}
Since $C(\lambda_1v_1,\ldots,\lambda_rv_r)=C(v_1,\ldots,v_r)$ for $\lambda_i\in\Q_+$, we may assume without loss of generality that $v_1,\ldots,v_r$ are contained in $L_p$. Furthermore, we may write $C$ as the disjoint union of open cones $C=\coprod_{j=1}^dC_j$, with each $C_j$ open and generated by a subset of $v_1,\ldots,v_r$. We pick $v_{r+1},\ldots,v_{n}$, also in $L_p$, so that $v_1,\ldots,v_n$ is a $\Q_p$-basis of $V_p$. Note that the lattice $U=\Z_pv_1+\cdots+\Z_p v_n$ is contained in the lattice $L_p$ with finite index. By Proposition \ref{P:pseudo-measure},
\begin{align*}
	\cA(\mu_{f',C}) &=\sum_{j=1}^d\sum_{v\in C_j} f'\otimes[L_p] (v) q^v\\
	&=\sum_{j=1}^d\sum_{v\in C_j} \sum_{x\in L_p/U} f'\otimes[x+U](v) q^v\\
	&=\sum_{j=1}^d\sum_{w\in L_p/U}\sum_{v\in C_j} f'\otimes[w+U](v)q^v,
\end{align*}
where the sum over $L_p/U$ is the sum over distinct cosets $x+V$. Again, by Proposition \ref{P:pseudo-measure}, this is the Amice transform of
\begin{equation}\label{E:decomposition}
	\sum_{j=1}^d\sum_{x\in L_p/U} \mu_{f',C_j,w+U}
\end{equation}
We conclude that $\mu_{f',C}$ is equal to (\ref{E:decomposition}) and is a measure when each summand is a measure. By Proposition \ref{P:simple_case}, we conclude $\mu_{f',C}$ is a measure when $f'$ satisfies the vanishing hypothesis for $v_1,\ldots,v_r$
\end{proof}

\section{The Shintani cocycle}
\subsection{Hill's construction}
We recall \S3 of \cite{Hi07}, slightly modifying Hill's conventions and construction . Hill's construction takes as input a choice of basis for $V$, so fix $\{w_1,\ldots,w_n\}$ a basis. 

Write $\cK^o_V$ for the abelian group of functions $V_\R\backslash\{0\}\rightarrow \Z$ generated by the characteristic function of rational open cones. We write $\cK_V$ for the group of functions $V_\R\rightarrow\Z$ whose restrictions to $V_\R\backslash\{0\}$ are in $\cK^o_V$. The group $\GL(V)$ acts on $\cK_V$ by
\begin{equation}\label{coneaction}
	(\gamma\cdot \kappa)(v)=\sign(\det\gamma) \kappa(\gamma^{-1} v).
\end{equation}
If $\kappa_1,\kappa_2$ are cone functions, then we will say $\kappa_1\leq \kappa_2$ if the support of $\kappa_1$ is contained in the support of $\kappa_2$.

The constant functions $V_\R\backslash \{0\}\rightarrow\Z$ form a submodule of $\cK_V$, and we write $\cL_V$ for the quotient $\cK_V/\Z$.

For example, if $v_1,\ldots,v_n$ are linearly independent vectors of $V$, the rational open cone $C^o(v_1,\ldots,v_n)$ is the set $\{ \sum_{i=1}^n \alpha_i v_i : \alpha_i\in \R_+\}$. Then the characteristic function of this open cone, denoted $[C^o(v_1,\ldots,v_n)]$, is an element of $\cK_V$.

\begin{defn}
If $\alpha_1w_1,\ldots,\alpha_nw_1$ are linearly independent, we will say $(\alpha_1,\ldots,\alpha_n)$ is non-degenerate. Degenerate will refer to the case that $\alpha_1w_1,\ldots,\alpha_nw_1$ are linearly dependent.
\end{defn}

Hill's cocycle is a $\GL(V)$-equivariant map $\sigma_{Hill}:\GL(V)^n\rightarrow\cK_V$ which, after quotienting out the constant functions, satisfies
\begin{equation}\label{cycle}
	\sum_{i=0}^n (-1)^n \sigma_{Hill}(\alpha_1,\ldots,\widehat{\alpha_i},\ldots,\alpha_n)=0.
\end{equation}
Naively, one might try to define a $\GL(V)$ cocycle by sending the tuple $(\alpha_1,\ldots,\alpha_n)$ to the cone function $[C^o(\alpha_1 w_1,\ldots,\alpha_n w_1)]$. However, one must decided what to do in the degenerate cases. Even after solving this problem, the resulting cocycle will no longer satisfying the cocycle condition (\ref{cycle}): the ``edges" of cones are missing, so they do not glue together. In the case of $V=\Q^2$, Solomon \cite{Sol98} solves these problems by giving the edges weight $1/2$. His cocycle (in Hill's language) is defined by
\begin{equation}\label{solomon}
	\sigma_{Solomon}(\alpha,\beta) = \sign\det(\alpha w_1,\beta w_1)\left([C^o(\alpha w_1,\beta w_1)]+\frac{1}{2}[C^o(\alpha w_1)]+\frac12[C^o(\beta w_1)]\right)
\end{equation}
with the convention that $\sign 0=0$. However, it's not clear how to extend this to higher dimensions. Solomon-Hu \cite{SolomonHu} define a cocycle on $\operatorname{PGL}_3(\Q)$, but their methods to not extend to higher dimension. Hill's construction, which we briefly recall, elegantly side-steps these problems. 

First, Hill notes that if $\{v_1,\ldots,v_n\}$ is a basis of $V$, then the cone function $[C^o(v_1,\ldots,v_n)]$ is given by
\begin{equation*}
	[C^o(v_1,\ldots,v_n)](w)=\begin{cases} 1 &\text{ if } v_1^*(w),\ldots,v_n^*(w)>0\\ 0 &\text{otherwise}\end{cases}
\end{equation*}
Next, Hill ``deforms" $\alpha_1 w_1,\ldots,\alpha_n w_1$ to a linearly independent set of vectors. Let $\F=\Q((\varepsilon_1))\cdots((\varepsilon_n))$. Every element of $f\in F$ can be expressed as a sum of monomials 
\begin{equation}\label{E:F_element}
	f=\sum_{\br=(r_1,\ldots,r_n)\in\Z^n} a_{\br} \varepsilon_1^{r_1}\cdots\varepsilon_{n}^{r_n}.
\end{equation}
Ordering the indices $\br\in\Z^n$ lexicographically, Hill defines the leading term of (a non-zero) $f$ to be the non-zero monomial $a_{\br}\varepsilon^\br$ for which $\br$ is smallest. For distinct $f,g\in \F$, Hill declares $f>g$ if the leading term of $f-g$ has positive coefficient, thus endowing $\F$ with the structure of an ordered field. Note that every positive power of $\varepsilon_j$ is smaller than every positive power of $\varepsilon_{j-1}$, and every positive power of $\varepsilon_1$ is smaller than every rational number.

Now consider the vector space $V_\F:=V\otimes_\Q \F$ over $\F$. For each $i\in\{1,\ldots,n\}$, define $b_i=w_1+\varepsilon_i w_2+\cdots +\varepsilon_i^{n-1} w_n$. This forms an $\F$-basis for $V_\F$ over $\F$, and in fact:

\begin{lem}[\cite{Hi07}, Lemma 1]\label{L:Hill_lemma_1}
For any $\alpha_1,\ldots,\alpha_n\in\GL(V)$ the vectors $\alpha_1 b_1,\ldots,\alpha_n b_n$ form a basis of $V\otimes_\Q \F$ over $\F$.
\end{lem}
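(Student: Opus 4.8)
The plan is to reduce the claim to a determinant computation in the ordered field $\F$. Fix $\alpha_1,\dots,\alpha_n\in\GL(V)$ and consider the $n\times n$ matrix $M$ whose $i$-th column is $\alpha_i b_i$ expressed in the basis $w_1,\dots,w_n$ of $V$. I want to show $\det M\neq 0$ in $\F$. The key observation is that each $b_i=w_1+\varepsilon_i w_2+\cdots+\varepsilon_i^{n-1}w_n$ is, up to applying $\alpha_i$, a ``Vandermonde-type'' vector in the indeterminate $\varepsilon_i$; the subtlety is that the $\alpha_i$ mix the coordinates, so $M$ is not literally a Vandermonde matrix. I would instead argue by specialization / leading-term analysis: expand $\det M$ as a polynomial in $\varepsilon_1,\dots,\varepsilon_n$ and show that its leading term (in Hill's lexicographic order) is nonzero.

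First I would set up the expansion. Write $\alpha_i b_i=\sum_{k=1}^n P_{ki}(\varepsilon_i)\,w_k$, where $P_{ki}(t)=\sum_{j=1}^n (\alpha_i)_{kj}\,t^{j-1}$ is a polynomial of degree $\le n-1$ in a single variable, and $(\alpha_i)_{kj}$ denotes the matrix entries of $\alpha_i$ in the $w$-basis. Then $\det M=\sum_{\pi\in S_n}\sgn(\pi)\prod_{i=1}^n P_{\pi(i),i}(\varepsilon_i)$. Each variable $\varepsilon_i$ appears only in the $i$-th factor, so as a polynomial in $\varepsilon_1,\dots,\varepsilon_n$ this is a sum of monomials $\varepsilon_1^{r_1}\cdots\varepsilon_n^{r_n}$ with $0\le r_i\le n-1$. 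The coefficient of $\varepsilon_1^{r_1}\cdots\varepsilon_n^{r_n}$ is $\sum_{\pi}\sgn(\pi)\prod_i (\alpha_i)_{\pi(i),\,r_i+1}$, which is exactly $\det$ of the matrix whose $i$-th column is the $(r_i+1)$-th column of $\alpha_i$.

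Now I claim there is a choice of exponents $(r_1,\dots,r_n)$ for which this determinant is nonzero, and moreover I can pick the lexicographically smallest such tuple to identify the leading term of $\det M$ and conclude $\det M\neq 0$. The cleanest way: proceed greedily. Since $\alpha_1$ is invertible, its columns span $V$; choose $r_1$ minimal so that column $r_1+1$ of $\alpha_1$ together with suitable later columns can complete to a basis — more precisely, run the following selection. For $i=1,\dots,n$ in turn, having chosen columns $c_1=r_1+1,\dots,c_{i-1}=r_{i-1}+1$ from $\alpha_1,\dots,\alpha_{i-1}$ that are linearly independent, pick $c_i=r_i+1$ to be the smallest index such that column $c_i$ of $\alpha_i$ is not in the span of the previously chosen columns; such $c_i$ exists because $\alpha_i$ is invertible, so its $n$ columns cannot all lie in an $(i-1)$-dimensional subspace. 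At the end $(r_1,\dots,r_n)$ gives a monomial whose coefficient is $\pm\det$ of a matrix with linearly independent columns, hence nonzero. Finally I would check that this greedy choice actually produces the \emph{lexicographically least} tuple with nonzero coefficient: if some $(r_1',\dots,r_n')<(r_1,\dots,r_n)$ had nonzero coefficient, then at the first index $i$ where they differ we have $r_i'<r_i$, contradicting the minimality in the $i$-th step of the selection (since $r_1'=r_1,\dots,r_{i-1}'=r_{i-1}$ forces the same chosen subspace). Therefore $a_{\br}\varepsilon^{\br}$ with this $\br$ is the leading term of $\det M$ in Hill's order, so $\det M\neq 0$ and $\alpha_1 b_1,\dots,\alpha_n b_n$ is a basis.

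\textbf{Main obstacle.} The computational heart is routine multilinear algebra; the one place to be careful is matching the greedy selection against Hill's specific lexicographic order on $\Z^n$ (smallest index first), to be sure the tuple I construct is genuinely the \emph{minimal} one with nonzero coefficient — this is what guarantees it is the leading term rather than being cancelled by something lexicographically smaller. Once that bookkeeping is pinned down, nonvanishing of $\det M$ is immediate.
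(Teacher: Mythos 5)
Your argument is correct. Note that the paper itself gives no proof of this statement --- it simply cites Lemma~1 of Hill's paper --- so what you have written is a self-contained substitute, and it works: the expansion of $\det M$ as a polynomial in $\varepsilon_1,\dots,\varepsilon_n$ with the coefficient of $\varepsilon_1^{r_1}\cdots\varepsilon_n^{r_n}$ equal to the determinant of the matrix whose $i$-th column is column $r_i+1$ of $\alpha_i$ is exactly right, and the greedy column selection (possible because the columns of each invertible $\alpha_i$ span $V$ and so cannot all lie in the $(i-1)$-dimensional span of the previously chosen columns) produces a tuple with nonzero coefficient. One small remark: for the lemma as stated you are done at that point, since distinct monomials $\varepsilon^{\br}$ cannot cancel one another in $\F$ --- a polynomial with one nonzero coefficient is a nonzero element of $\F$ --- so the ``main obstacle'' you flag (verifying that your tuple is lexicographically \emph{minimal} among those with nonzero coefficient) is not needed for nonvanishing. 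Your minimality argument is nonetheless correct as written (if $(\br')<(\br)$ had nonzero coefficient, linear independence of the corresponding columns would contradict the greedy choice at the first index of disagreement), and identifying the leading term in this way is not wasted effort: it is precisely what pins down $\sign\det(\alpha_1b_1,\dots,\alpha_nb_n)$, which Hill's definition of $\sigma_{Hill}$ actually uses.
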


Thus, for any $\alpha_1,\ldots,\alpha_n\in\GL(V)$, we have a natural cone function (on $V_\F$) by putting
\begin{equation*}
	[C^o(\alpha_1 b_1,\ldots,\alpha_n b_n)](w)=\begin{cases} 1 &\text{ if } (\alpha_1b_1)^*(w),\ldots,(\alpha_n b_n)^*(w)>0\\ 0 &\text{otherwise}\end{cases}
\end{equation*}

The key result is that this cone function on $V_\F$ restricts to a cone function on $V\subset V_\F$.

\begin{thm}[\cite{Hi07}, Theorem 1]\label{T:Hill_theorem_1}
The cone function $[C^o(\alpha_1b_1,\ldots,\alpha_n b_n)] :V_\F\rightarrow \Z$ restricts to a rational cone function $[C^o(\alpha_1b_1,\ldots,\alpha_n b_n] :V\rightarrow \Z$.
\end{thm}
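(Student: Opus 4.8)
The plan is to show that the function $w\mapsto [C^o(\alpha_1 b_1,\ldots,\alpha_n b_n)](w)$, which a priori is defined on $V_\F$, takes a constant value on a rational open cone decomposition of $V_\R\setminus\{0\}$ when restricted to $V$, so that it lies in $\cK_V^o$ (up to behavior at the origin). The starting point is Lemma \ref{L:Hill_lemma_1}: the vectors $\alpha_1 b_1,\ldots,\alpha_n b_n$ are an $\F$-basis, so the dual basis $(\alpha_i b_i)^*$ consists of well-defined linear functionals $V_\F\to\F$, and for $w\in V$ the quantities $(\alpha_i b_i)^*(w)\in\F$ are what determine membership in the cone. The key point is that each $(\alpha_i b_i)^*(w)$ is a \emph{nonzero} element of the ordered field $\F$ for generic $w$, and its sign is governed by its leading term in the lexicographic ordering on $\Q((\varepsilon_1))\cdots((\varepsilon_n))$.

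The main steps I would carry out: first, express $(\alpha_i b_i)^*(w)$ explicitly. Since $b_i = w_1 + \varepsilon_i w_2 + \cdots + \varepsilon_i^{n-1} w_n$, the matrix expressing the $b_i$ in terms of the $w_j$ is a Vandermonde-type matrix in the $\varepsilon_i$, so its inverse (hence the dual functionals) has entries that are rational functions in $\varepsilon_1,\ldots,\varepsilon_n$ with coefficients determined by $\alpha_1,\ldots,\alpha_n$ acting on the fixed basis $w_1,\ldots,w_n$. For $w\in V$ with coordinates in $\Q$, $(\alpha_i b_i)^*(w)$ is thus an element of $\Q(\varepsilon_1,\ldots,\varepsilon_n)\subset\F$, whose leading monomial one can read off. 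Second, I would argue that the sign of $(\alpha_i b_i)^*(w)$ in $\F$ — i.e. whether it is $>0$ — is determined by the sign (in $\R$) of the first nonvanishing coefficient in a finite list of real-linear functionals of $w$ (the successive ``$\varepsilon$-adic coefficients"). Third, each such real-linear functional cuts $V_\R$ into half-spaces, and the common refinement of all these hyperplane arrangements decomposes $V_\R\setminus\{0\}$ into finitely many rational open polyhedral cones (rational because the $\alpha_i$ and $w_j$ are rational, so the functionals have rational coefficients) on each of which the tuple of signs, hence $[C^o(\alpha_1 b_1,\ldots,\alpha_n b_n)](w)$, is constant. This exhibits the restriction as a $\Z$-combination of characteristic functions of rational open cones, i.e. an element of $\cK_V^o$, and extending by whatever value at $0$ gives an element of $\cK_V$; rationality is then immediate.

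The main obstacle I anticipate is the bookkeeping in step two: carefully extracting, from the rational-function expression for $(\alpha_i b_i)^*(w)$, the precise finite sequence of real-linear functionals whose signs determine the sign in $\F$, and checking that this sequence has rational coefficients and that the resulting cone decomposition is genuinely finite and simplicial-compatible. One has to be a little careful because the Vandermonde inverse introduces denominators that are products of differences $\varepsilon_i - \varepsilon_j$; one should clear these denominators first (they are positive in $\F$ up to an overall sign depending only on the ordering of indices, since $\varepsilon_i - \varepsilon_j$ has a definite sign in the ordered field), so that the sign of $(\alpha_i b_i)^*(w)$ equals, up to a fixed global sign, the sign of a genuine polynomial in $\varepsilon_1,\ldots,\varepsilon_n$ with $\R$-linear-in-$w$ coefficients, and then the leading-term analysis is clean. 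Once that reduction is in place, the rest is the standard fact that a finite hyperplane arrangement with rational equations partitions space into rational polyhedral cones.
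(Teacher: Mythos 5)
The paper does not prove this statement itself---it is imported verbatim from Hill's paper (Theorem 1 of \cite{Hi07})---so there is no internal proof to compare against; judged on its own, your sketch is correct and is essentially the standard argument (and the one Hill gives): write $(\alpha_i b_i)^*(w)$ by Cramer's rule as a quotient of polynomials in $\varepsilon_1,\ldots,\varepsilon_n$ whose numerator has coefficients that are $\Q$-linear functionals of $w$, observe that the sign in the ordered field $\F$ is the sign of the lexicographically first nonvanishing such functional (times the fixed sign of the denominator), and conclude that the sign vector, hence the cone function, is constant on the relatively open cells of the resulting finite rational hyperplane arrangement, each cell being a disjoint union of rational open simplicial cones. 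One small correction: the denominator is $\det(\alpha_1 b_1,\ldots,\alpha_n b_n)$, which for general $\alpha_i\in\GL(V)$ is \emph{not} a Vandermonde determinant and need not factor into differences $\varepsilon_i-\varepsilon_j$; but this is harmless, since all you need is that it is a single fixed nonzero element of $\F$ (guaranteed by Lemma \ref{L:Hill_lemma_1}) and therefore contributes only one global sign, independent of $w$, to each $(\alpha_i b_i)^*(w)$.
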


\begin{defn}
Let $\sigma_{Hill}(\alpha_1,\ldots,\alpha_n)=\sign\det(\alpha_1b_1,\ldots,\alpha_n b_n)[C^o(\alpha_1 b_1,\ldots \alpha_n b_n)]|_{V}$.
\end{defn}

By Theorem \ref{T:Hill_theorem_1}, $\sigma_{Hill}$ is valued the module $\cK_V$. It is not hard to see that it is $\GL(V)$-equivariant, but moreover it satisfies the cocycle condition ({22}).

\begin{thm}[Hill]\label{T:Hill_description}
The map $\sigma_{Hill} : \GL(V)^n\rightarrow \cK_V$  is, modulo constant functions, an $n-1$ cocycle for $\GL(V)$. Moreover, if $(\alpha_1,\ldots,\alpha_n)$ is non-degenerate, there exists a simplicial cone 
\begin{equation}\label{E:cone_description}
	C^o(\alpha_1 w_1,\ldots, \alpha_n w_1)\subset C\subset C(\alpha_1w_1,\ldots\alpha_n w_1)
\end{equation}
such that $\sigma_{Hill}(\alpha_1,\ldots,\alpha_n)=\pm[C]$. 
\end{thm}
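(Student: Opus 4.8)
The plan is to prove the three assertions of Theorem \ref{T:Hill_description} in turn, leaning on Theorem \ref{T:Hill_theorem_1} (that $[C^o(\alpha_1b_1,\ldots,\alpha_nb_n)]$ restricts to a rational cone function on $V$), Lemma \ref{L:Hill_lemma_1} (that $\alpha_1b_1,\ldots,\alpha_nb_n$ is always an $\F$-basis), and the explicit sign-determinant normalization in the definition of $\sigma_{Hill}$. First, $\GL(V)$-equivariance: for $\gamma\in\GL(V)$ one checks that $\gamma$ acts on the ``deformed'' basis vectors by $\gamma(\alpha_ib_i)=(\gamma\alpha_i)b_i$ (since $\gamma$ is defined over $\Q$ and $b_i$ lives in the $\F$-part), so $[C^o((\gamma\alpha_1)b_1,\ldots,(\gamma\alpha_n)b_n)](w)=[C^o(\alpha_1b_1,\ldots,\alpha_nb_n)](\gamma^{-1}w)$, and the determinant factor transforms by $\sign\det(\gamma)$; comparing with the action \eqref{coneaction} gives equivariance. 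This step is essentially bookkeeping.

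Next, the cocycle relation \eqref{cycle} modulo constants. Here I would argue pointwise: fix $w\in V$ and evaluate the alternating sum $\sum_{i=0}^n(-1)^i\sigma_{Hill}(\alpha_0,\ldots,\widehat{\alpha_i},\ldots,\alpha_n)(w)$. The key observation is that for a fixed $w$, the quantity $\sign\det(\alpha_{j_1}b_{j_1},\ldots,\alpha_{j_{n-1}}b_{j_{n-1}})\cdot[C^o(\alpha_{j_1}b_{j_1},\ldots,\alpha_{j_{n-1}}b_{j_{n-1}})](w)$ — more precisely the signed count of which closed cone $w$ lies in — is governed by the signs of the linear forms $(\alpha_ib_i)^\ast(w)$, i.e. by on which side of each hyperplane $\{(\alpha_ib_i)^\ast=0\}$ the point $w$ sits. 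Since the $\alpha_ib_i$ form a basis of $V_\F$, generically $w$ has all these forms nonzero, and the alternating sum over deleting one vector at a time is exactly the combinatorial identity that the characteristic functions of the $n$ facets of a simplicial cone (with appropriate signs) sum to a constant — this is the standard ``boundary of a simplex'' cancellation, and it is why one must pass to the quotient $\cL_V=\cK_V/\Z$. I would cite or reproduce Hill's Lemma here rather than rederiving the simplicial-complex combinatorics from scratch; alternatively one can phrase it as: the function $w\mapsto$ (number of the $n$ sub-cones containing $w$, counted with sign) is constant on $V_\F\setminus\{0\}$ because it is a sum of a coboundary. This is the step I expect to be the main obstacle, since making the pointwise cancellation rigorous requires carefully handling the boundary (measure-zero in $V_\F$, but not obviously so after restricting to $V$) — the whole point of Hill's $\varepsilon$-deformation is precisely that the degenerate configurations over $\Q$ become non-degenerate over $\F$, so that the only ``bad'' locus is a genuine union of hyperplanes in $V_\F$ and the restriction to $V$ inherits the constancy.

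Finally, the geometric description \eqref{E:cone_description} in the non-degenerate case. Assume $\alpha_1w_1,\ldots,\alpha_nw_1$ are linearly independent over $\Q$. The claim is that $[C^o(\alpha_1b_1,\ldots,\alpha_nb_n)]|_V$ is the characteristic function of a simplicial cone $C$ pinched between the open and closed cones on the $\alpha_iw_1$. To see this, note $b_i=w_1+\varepsilon_iw_2+\cdots+\varepsilon_i^{n-1}w_n$ specializes to $w_1$ as all $\varepsilon_j\to 0$, so $\alpha_ib_i$ is an infinitesimal perturbation of $\alpha_iw_1$; for $w\in V$ with all $(\alpha_iw_1)^\ast(w)\neq 0$ the sign of $(\alpha_ib_i)^\ast(w)$ agrees with that of $(\alpha_iw_1)^\ast(w)$ (the leading term in the $\varepsilon$-expansion dominates), so on the interior $w$ lies in $C^o(\alpha_1b_1,\ldots,\alpha_nb_n)$ iff it lies in $C^o(\alpha_1w_1,\ldots,\alpha_nw_1)$; on the boundary hyperplanes the lower-order $\varepsilon$-terms break ties in a definite way, placing each boundary face either wholly in $C$ or wholly out. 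Hence $C$ contains the open cone and is contained in the closed cone, and since $\sigma_{Hill}$ carries the sign $\sign\det(\alpha_1b_1,\ldots,\alpha_nb_n)=\pm1$ we get $\sigma_{Hill}(\alpha_1,\ldots,\alpha_n)=\pm[C]$. The one thing to verify is that the resulting $C$ is genuinely simplicial (a finite union of open cones glued along boundaries), which follows because it is cut out by finitely many rational linear inequalities — some strict, some not — among the $(\alpha_iw_1)^\ast$, exactly by Theorem \ref{T:Hill_theorem_1}'s rationality assertion.
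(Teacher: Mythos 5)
Your proposal is correct in substance and in fact supplies considerably more argument than the paper, whose entire proof consists of the attribution to Hill for the cocycle property together with the sentence ``a calculation shows \eqref{E:cone_description} --- for details see \cite{Spi12}, Lemma 3.5.'' Your treatment of the non-degenerate cone description is exactly that calculation: since $\det(\alpha_1b_1,\ldots,\alpha_nb_n)$ and the adjugate matrix have $\varepsilon$-free leading terms equal to their undeformed values, each dual coordinate $(\alpha_ib_i)^\ast(w)$ has leading term $(\alpha_iw_1)^\ast(w)$, so the sign in the ordered field $\F$ agrees with the classical sign off the boundary hyperplanes and is resolved by lower-order $\varepsilon$-terms on them; this pins the support between the open and closed cones and makes it a finite union of rational open faces by Theorem \ref{T:Hill_theorem_1}. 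The equivariance check is likewise the bookkeeping the paper leaves implicit.

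One caveat on the cocycle step, which you defer to Hill but describe as ``the standard boundary-of-a-simplex cancellation.'' Taken literally that identity does not apply here, because the deformation $b_j$ is attached to the \emph{slot}, not to the group element: in the term $\sigma_{Hill}(\alpha_0,\ldots,\widehat{\alpha_i},\ldots,\alpha_n)$ of \eqref{cycle}, the surviving entries are re-paired with $b_1,\ldots,b_n$ in order, so a given $\alpha_k$ contributes the vector $\alpha_kb_{k+1}$ in some terms and $\alpha_kb_k$ in others. The cones appearing in the alternating sum are therefore not the $n+1$ facets of a single simplicial cone on a fixed set of $n+1$ generators, and the naive combinatorial identity would have to be supplemented by an argument comparing the two deformations of each $\alpha_k$. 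This is precisely where the content of Hill's proof lies; citing \cite{Hi07} is legitimate (the paper does exactly that), but if you intend to ``reproduce Hill's Lemma'' you will need his actual argument rather than the simplex cancellation as stated. The rest of the proposal stands.
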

\begin{proof}
A calculation shows (\ref{E:cone_description})-- for details see \cite{Spi12}, Lemma 3.5.
\end{proof}

\subsection{The $\widetilde{\cM}$- valued cocycle}
Now fix $f'\in\cS(V^{(p)})$. Write $\Gamma\subset\GL(V)$ for the stabilizer of $f'\otimes[L_p]$. We apply our methods to Hill's cocycle, constructing a pseudo-measure valued cocycle and describe when its specializations are measures.

Let us write $\Gamma\subset\GL(V)$ for the stabilizer of $f'\otimes[L_p]$. Note that $\Gamma$ acts on $L_p$ (on the left), and hence by duality, $\Gamma$ has a right action on $\cC(L_p)$ and a left action on $\cM(L_p)$. This extends to a left action of $\Gamma$ on $\widetilde{\cM}(L_p)=\cM(L_p)\otimes_{\Z[L_p]}\Z[L_p]_S$ by acting on each term, endowing $\widetilde{\cM}(L_p)$ with the structure of a $\Z[\Gamma]$-module. 

If $\kappa\in \cK_V$ is a cone function, then it is a finite $\Z$-linear combination of characteristic functions of pointed open cones. Each open cone, with $f'$, gives us a pseudo-measure $\mu_{C,f'}$. After taking appropriate linear combinations, we have a pseudo-measure $\mu_{\kappa,f'}\in\widetilde{\cM}(L_p)$. In this way, we have a (well-defined!) map $\cK_V\rightarrow\widetilde{\cM}(L_p)$, and in fact:

\begin{lem}
The map $\cK_V\rightarrow\widetilde{\cM}(L_p)$ is a homomorphism of $\Z[\Gamma_{}]$-modules.
\end{lem}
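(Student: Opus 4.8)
The plan is to verify two things: that the map $\kappa\mapsto\mu_{\kappa,f'}$ is additive on $\cK_V$, and that it intertwines the $\Gamma$-action on cone functions (formula (\ref{coneaction})) with the $\Gamma$-action on $\widetilde{\cM}(L_p)$ coming from the dual action on $\cC(L_p)$. Additivity is essentially built into the construction: a cone function $\kappa$ is by definition a finite $\Z$-linear combination $\kappa=\sum_j n_j[C_j^o]$ of characteristic functions of rational open cones, and one \emph{defines} $\mu_{\kappa,f'}:=\sum_j n_j\mu_{C_j,f'}$. The content of ``well-defined'' here is that two different expressions of the same $\kappa$ as such a combination give the same pseudo-measure; this follows from the uniqueness clause in Proposition \ref{P:pseudo-measure} together with the fact that $v\mapsto\sum_j n_j [C_j^o](v)$ is literally the function $\kappa$, so the Amice transforms agree near $q=0$ and hence the pseudo-measures agree. (One should be slightly careful that $\kappa$ may be nonzero at $0$, but only values on $V\setminus\{0\}$ contribute to the sums $\sum_{v\in C}$, so this is harmless; alternatively one restricts to $\cK_V^o$ from the start.) Once additivity is in hand, $\Z$-linearity of the map is immediate.

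The real step is $\Gamma$-equivariance. Fix $\gamma\in\Gamma$ and a rational open cone $C=C^o(v_1,\ldots,v_r)$ with $v_i\in V$. I would compute the Amice transform of $\gamma\cdot\mu_{C,f'}$ and of $\mu_{\gamma\cdot C,f'}$ (where $\gamma\cdot[C^o] = \sign(\det\gamma)[C^o(\gamma v_1,\ldots,\gamma v_r)]$) and check they agree near $q=0$, invoking the uniqueness in Proposition \ref{P:pseudo-measure}. For this I need the effect of $\gamma$ on the Amice transform. Since the $\Gamma$-action on $\cM(L_p)$ is dual to the right action on $\cC(L_p)$ given by $(\phi|\gamma)(v)=\phi(\gamma v)$, we have $(\gamma\cdot\mu)(\phi)=\mu(\phi|\gamma)$, and applying this to the evaluation character $\phi=q^{(\cdot)}$ shows $\cA(\gamma\cdot\mu)(q) = \int q^{\gamma v}d\mu(v)$ — i.e., $\gamma$ acts on the parameter by the transpose/inverse-transpose change of coordinates. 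Plugging $\mu=\mu_{C,f'}$ into $\cA(\mu_{C,f'})(q)=\sum_{v\in C} f'\otimes[L_p](v)\,q^v$ and reindexing $v\mapsto \gamma^{-1}v$ turns the sum into $\sum_{v\in \gamma C}(f'\otimes[L_p])(\gamma^{-1}v)\,q^v$. Because $\gamma\in\Gamma$ stabilizes $f'\otimes[L_p]$, $(f'\otimes[L_p])(\gamma^{-1}v)=(f'\otimes[L_p])(v)$, so this is exactly $\cA(\mu_{\gamma C,f'})(q)$ — except I still owe the sign $\sign(\det\gamma)$. This is the one genuinely delicate point: the $\cK_V$-action carries a $\sign\det$ but the naive computation above does not produce one. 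The resolution is that $\mu_{\kappa,f'}$ must itself be defined with an orientation sign built in (matching the $\pm[C]$ that appears in $\sigma_{Hill}$ in Theorem \ref{T:Hill_description}), OR the sign is absorbed because a negatively-oriented cone is recorded as $-[C^o]$ in $\cK_V$; I would pin this down by tracking exactly how $\mu_{\kappa,f'}$ is assigned to $\kappa=[C^o(v_1,\ldots,v_r)]$ versus its orientation class, and confirm the sign conventions are consistent so that the $\sign(\det\gamma)$ on the $\cK_V$ side matches a corresponding sign change on the $\widetilde\cM$ side.

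Assembling: by $\Z$-linearity it suffices to check equivariance on generators $[C^o]$ of $\cK_V$, which the above reindexing argument handles, modulo the sign bookkeeping. I expect the sign/orientation consistency to be the main obstacle — not because it is deep, but because it requires stating precisely which normalization of $\mu_{C,f'}$ (with or without an intrinsic orientation sign) is in force, and the excerpt's Proposition \ref{P:pseudo-measure} is stated without a sign while formula (\ref{coneaction}) carries one. Everything else (additivity, well-definedness, $\Z$-linearity) is a routine consequence of the uniqueness assertion in Proposition \ref{P:pseudo-measure}.
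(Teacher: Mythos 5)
Your proposal is correct and follows essentially the same route as the paper, whose entire proof is the remark that the claim ``can be done by comparing power series at $q=0$'' (with references to Solomon--Hu and Hill): well-definedness and additivity via the uniqueness clause of Proposition \ref{P:pseudo-measure}, and equivariance by reindexing $\sum_{v\in C} f'\otimes[L_p](v)\,q^v$ under $v\mapsto\gamma^{-1}v$ and using that $\gamma$ fixes $f'\otimes[L_p]$. The $\sign(\det\gamma)$ issue you flag is a genuine one, but it is a gap in the paper's stated conventions rather than in your argument: as written, the action (\ref{coneaction}) on $\cK_V$ carries a sign twist while the dual action on $\widetilde{\cM}(L_p)$ does not, so the map is equivariant only after either restricting to elements of positive determinant or twisting the measure module by $\sign\circ\det$. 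In the arithmetic application this is harmless, since the relevant elements of $\Gamma$ (totally positive units of a totally real field acting by multiplication) have norm, hence determinant, equal to $+1$; but you are right that a careful statement of the lemma must pin down this normalization.
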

\begin{proof}
This is an elementary calculation, and can be done by comparing power series at $q=0$. See also Solomon-Hu \cite{SolomonHu}, Lemma 2.1, or \cite{Hi07}, \S2.
\end{proof}

\begin{defn}
The $p$-adic Shintani cocycle attached to the data of $(V,L,f')$ is the composition
\begin{equation}
	\Phi_{f'}:\Gamma_{}^n\xrightarrow{\sigma_{Hill}}\cK_V\rightarrow\widetilde{\cM}(L_p)\rightarrow\widetilde{\cM}(L_p)/\Z\delta_0.
\end{equation}
It is an $n-1$ cocycle for $\Gamma$ valued in $\widetilde\cM(L_p)/\Z\delta_0$.
\end{defn}

Our main theorem states:

\begin{thm}\label{T:measure_valued}
Suppose $f'$ satisfies the vanishing hypothesis for $w_1$. Then $\Phi_{f'}(\alpha_1,\ldots,\alpha_n)$ is a measure on $L_p$ for all non-degenerate $(\alpha_1,\ldots,\alpha_n)\in \Gamma^n$. 
\end{thm}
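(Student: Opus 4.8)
The strategy is to reduce the claim to Proposition \ref{P:simple_case} (equivalently, to the Theorem immediately following it) by analyzing the cone $C$ that Theorem \ref{T:Hill_description} attaches to a non-degenerate tuple $(\alpha_1,\ldots,\alpha_n)$. By that theorem, $\sigma_{Hill}(\alpha_1,\ldots,\alpha_n) = \pm[C]$ for some simplicial cone $C$ with $C^o(\alpha_1 w_1,\ldots,\alpha_n w_1)\subset C\subset C(\alpha_1 w_1,\ldots,\alpha_n w_1)$. The extremal rays of $C$ are therefore among the rays spanned by $\alpha_1 w_1,\ldots,\alpha_n w_1$. So by the measure criterion Theorem (the one after Proposition \ref{P:simple_case}), $\Phi_{f'}(\alpha_1,\ldots,\alpha_n) = \pm\mu_{f',C}$ is a measure provided $f'$ satisfies the vanishing hypothesis for each of $\alpha_1 w_1,\ldots,\alpha_n w_1$.

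Thus the key step is: \emph{if $f'$ satisfies the vanishing hypothesis for $w_1$, then $f'$ satisfies it for $\alpha w_1$ for every $\alpha\in\Gamma$.} This is where the hypothesis $\alpha\in\Gamma$ — i.e. $\alpha$ stabilizes $f'\otimes[L_p]$ — is essential. I would argue as follows. Fix $\alpha\in\Gamma$ and a vector $v\in V$; I must show $h^{(p)}(\pi_{v,\alpha w_1} f') = 0$. The point is the equivariance of the construction: for any test function $g$ on $V_\ell$ (or on $V^{(p)}$), one has $\pi_{v,\alpha w_1}(g) = \pi_{\alpha^{-1}v,\, w_1}(g\,|\,\alpha)$ as test functions on $\Q_\ell$, since $(g|\alpha)(\alpha^{-1}v + x w_1) = g(v + x\,\alpha w_1)$. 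Now $h^{(p)}$ is, up to the normalization by $[L:L_f]$, an average over a period lattice, and it is invariant under the $\GL(V)$-action in the following normalized sense: one checks (or cites the discussion of $h_V$ in \S2, together with Lemma following \eqref{H2}) that $h^{(p)}(g'|\alpha) = |\det\alpha|^{-1}\, h^{(p)}(g')$ for $\alpha\in\GL(V)$ — and in any case the factor is a nonzero scalar. Hence $h^{(p)}(\pi_{v,\alpha w_1} f') = h^{(p)}(\pi_{\alpha^{-1}v,\,w_1}(f'|\alpha))$, and since $f'|\alpha = f'$ (as $\alpha$ stabilizes $f'\otimes[L_p]$, and the $p$-component $[L_p]$ is separately stabilized, so the away-from-$p$ part $f'$ is stabilized — or one simply works with $f'\otimes[L_p]$ throughout and invokes Lemma \ref{L:Haar_vanishing}), this equals $h^{(p)}(\pi_{\alpha^{-1}v,\,w_1} f') = 0$ by the vanishing hypothesis for $w_1$, applied to the vector $\alpha^{-1}v\in V$. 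As $v$ was arbitrary, $f'$ satisfies the vanishing hypothesis for $\alpha w_1$.

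Combining: each extremal ray of $C$ is spanned by some $\alpha_i w_1$ with $\alpha_i\in\Gamma$, so $f'$ satisfies the vanishing hypothesis for all extremal rays of $C$, and the measure criterion Theorem gives that $\mu_{f',C}$ — hence $\Phi_{f'}(\alpha_1,\ldots,\alpha_n)$, which differs from it by a sign and the passage to $\widetilde{\cM}(L_p)/\Z\delta_0$ — is a measure. The main obstacle I anticipate is bookkeeping around the $\GL(V)$-equivariance of the Haar functional $h^{(p)}$ and the interplay between the away-from-$p$ test function $f'$ and the full test function $f'\otimes[L_p]$ stabilized by $\Gamma$: one must be careful that the scaling factor $|\det\alpha|$ appearing in $h^{(p)}(g'|\alpha)$ is a $p$-adic (or rather $\ell$-adic, away from $p$) unit or at least nonzero so that it cannot destroy the vanishing — which it is, being a ratio of lattice indices, so vanishing is preserved. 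A secondary subtlety is that $C$ may fail to be open or may have lower-dimensional pieces, but Theorem \ref{T:Hill_description} already packages $C$ as a simplicial cone with extremal rays among the $\alpha_i w_1$, and the measure criterion Theorem is stated for exactly such cones, so no extra work is needed there.
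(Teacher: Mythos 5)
Your proposal is correct and follows essentially the same route as the paper: the paper's Lemma \ref{L:gamma_action} is exactly your identity $\pi_{v,\alpha w_1}f=\pi_{\alpha^{-1}v,w_1}(f|\alpha)=\pi_{\alpha^{-1}v,w_1}f$, which propagates the vanishing hypothesis from $w_1$ to all of $\Gamma w_1$, and then Equation \eqref{E:cone_description} plus the measure criterion finishes the argument as you describe. (Your worry about the determinant factor in $h^{(p)}(g'|\alpha)$ is moot, since the projected test functions on $\Q$ are literally equal before the Haar functional is applied.)
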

Before proceeding the the proof, we record an elementary lemma.
\begin{lem}\label{L:gamma_action}
For all non-zero $w \in V$, $\gamma\in\Gamma_{f}$, and $v\in V$:
\begin{equation}
	\pi_{v,\gamma w } f(x)=f(v+x\gamma w)=f(\gamma (\gamma^{-1}v+x
	w))=\pi_{\gamma^{-1}v,w} f|\gamma=\pi_{\gamma^{-1}v,w} f.
\end{equation}
\end{lem}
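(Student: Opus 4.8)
\textbf{Proof plan for Lemma~\ref{L:gamma_action}.}

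The statement to prove is the chain of equalities
\[
	\pi_{v,\gamma w } f(x)=f(v+x\gamma w)=f(\gamma (\gamma^{-1}v+x w))=\pi_{\gamma^{-1}v,w} f|\gamma=\pi_{\gamma^{-1}v,w} f
\]
for all non-zero $w\in V$, $\gamma\in\Gamma_f$, and $v\in V$. This is purely a matter of unwinding the definitions introduced earlier in the excerpt, so the plan is simply to justify each equality in turn and note why no subtlety is hidden.

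First I would invoke the definition of $\pi_{v,w}$: for any test function $g$ and any vector $u$, $(\pi_{u,w}g)(x)=g(u+xw)$ by the displayed definition of the projection map. Applying this with $u=v$ and with the ``direction vector'' taken to be $\gamma w$ (which is non-zero since $\gamma\in\GL(V)$ and $w\neq 0$, so $\pi_{v,\gamma w}$ is defined) gives the first equality $\pi_{v,\gamma w}f(x)=f(v+x\gamma w)$. The second equality $f(v+x\gamma w)=f(\gamma(\gamma^{-1}v+xw))$ is just linearity of the $\GL(V)$-action: $\gamma(\gamma^{-1}v+xw)=v+x\gamma w$ since $\gamma$ is $\Q$-linear (and $x\in\Q_\ell$, resp.\ the appropriate scalar ring, acts through the module structure). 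For the third equality, recall the right $\GL(V)$-action on test functions, $(f|\gamma)(u):=f(\gamma u)$; taking $u=\gamma^{-1}v+xw$ yields $f(\gamma(\gamma^{-1}v+xw))=(f|\gamma)(\gamma^{-1}v+xw)$, and then reading off the definition of $\pi$ again with base point $\gamma^{-1}v$ and direction $w$ gives $(f|\gamma)(\gamma^{-1}v+xw)=\big(\pi_{\gamma^{-1}v,w}(f|\gamma)\big)(x)$. Finally, the last equality $\pi_{\gamma^{-1}v,w}(f|\gamma)=\pi_{\gamma^{-1}v,w}f$ is immediate from $\gamma\in\Gamma_f$, i.e.\ $f|\gamma=f$ by definition of the stabilizer.

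There is essentially no obstacle here: the only point requiring a word of care is the implicit claim that $\pi_{v,\gamma w}$ and $\pi_{\gamma^{-1}v,w}$ land in $\cS(\Q_\ell)$ (resp.\ $\cS(\Q^{(p)})$, $\cS(\Q)$) — but this was already recorded (``a fortiori, $\pi_{v,w}f$ is indeed a test function'') when $\pi$ was introduced, and it applies verbatim with $w$ replaced by $\gamma w$. One should also remark, as the paper does implicitly, that the identity holds simultaneously at every local component and hence for the global, prime-to-$p$, and one-dimensional versions of $\pi$, since each equality was checked pointwise in $x$ and the $\GL(V)$-action is defined componentwise. I would therefore present the proof as a single displayed chain with a one-line justification of each step and a closing remark that $f|\gamma=f$ is exactly the hypothesis $\gamma\in\Gamma_f$.
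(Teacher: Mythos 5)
Your proposal is correct and matches the paper, which treats this lemma as a pure unwinding of definitions (the displayed chain itself is the paper's entire justification): definition of $\pi_{v,w}$, linearity of $\gamma$, the right action $(f|\gamma)(u)=f(\gamma u)$, and $f|\gamma=f$ for $\gamma$ in the stabilizer. Your added remarks on $\gamma w\neq 0$ and on the identity holding for each version of $\pi$ are fine but not needed beyond what the paper already records.
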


Now we are ready to prove the main result.
\begin{proof}
If $f'$ satisfies the vanishing hypothesis for $w_1$, then by Lemma \ref{L:gamma_action} it satisfies the vanishing hypothesis for all $v\in\Gamma w_1$. If $(\alpha_1,\ldots,\alpha_n)\in\Gamma^n$ is non-degenerate, then Equation (\ref{E:cone_description}) implies $\sigma_{Hill}(\alpha_1,\ldots,\alpha_n)$ is (up to sign) the characteristic function of open cones $C_i$ generated by $\alpha_1w_1,\ldots,\alpha_nw_1$. The vanishing criterion implies $\mu_{f',C_i}$ is a measure, so
\begin{equation}
	\Phi_{f'}(\alpha_1,\ldots,\alpha_n)=\pm\sum \mu_{f',C_i}
\end{equation}
is a measure.
\end{proof}
\begin{cor}
Suppose $\dim_\Q(V)=2$, and $f'$ satisfies the vanishing hypothesis for $w_1$. Then $\Phi_{f'}$ is a measure-valued cocycle for $\Gamma$
\end{cor}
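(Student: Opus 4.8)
\emph{Plan.} Given Theorem~\ref{T:measure_valued}, the content of the corollary is the \emph{degenerate} case: since $\dim_\Q V = 2$ we have $n=2$, so $\Phi_{f'}\colon \Gamma^2\to\widetilde{\cM}(L_p)/\Z\delta_0$ is a $1$-cocycle whose value on every non-degenerate pair is already known to be (the class of) a measure, and I must extend this to all of $\Gamma^2$. The idea is to use the cocycle relation to rewrite a degenerate value as a combination of non-degenerate ones. For any $\alpha_0,\alpha_1,\alpha_2\in\Gamma$, the cocycle identity (\ref{cycle}) in degree $n=2$ gives, modulo constant functions, $\sigma_{Hill}(\alpha_1,\alpha_2)=\sigma_{Hill}(\alpha_0,\alpha_2)-\sigma_{Hill}(\alpha_0,\alpha_1)$; pushing this through the $\Z[\Gamma]$-homomorphism $\cK_V\to\widetilde{\cM}(L_p)\to\widetilde{\cM}(L_p)/\Z\delta_0$ yields
\[
	\Phi_{f'}(\alpha_1,\alpha_2)=\Phi_{f'}(\alpha_0,\alpha_2)-\Phi_{f'}(\alpha_0,\alpha_1).
\]

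So, given a degenerate pair $(\alpha_1,\alpha_2)$ — meaning $\alpha_1 w_1$ and $\alpha_2 w_1$ are proportional — it suffices to produce some $\alpha_0\in\Gamma$ with $\alpha_0 w_1\notin\Q\,\alpha_1 w_1$: then both $(\alpha_0,\alpha_1)$ and $(\alpha_0,\alpha_2)$ are non-degenerate, each right-hand term is a measure by Theorem~\ref{T:measure_valued}, and the difference of two measures is a measure (the ambiguity by $\Z\delta_0$ is harmless, $\delta_0$ itself being a measure). There are two possibilities. If $\alpha_1 w_1\notin\Q w_1$, take $\alpha_0=1$, since then $w_1=1\cdot w_1\notin\Q\,\alpha_1 w_1$. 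Otherwise $\alpha_1 w_1$, hence also $\alpha_2 w_1$, lies in $\Q w_1$, and what is needed is an $\alpha_0\in\Gamma$ not fixing the line $\Q w_1$.

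\emph{The main obstacle} is therefore the case in which $\Q w_1$ is a $\Gamma$-stable line — equivalently, $\Gamma$ lies in the Borel subgroup preserving $\Q w_1$ — for then \emph{every} pair in $\Gamma^2$ is degenerate and no reduction to Theorem~\ref{T:measure_valued} is available. This does not occur in the arithmetic applications: when $\Gamma$ is built from the totally positive units of a real quadratic field $F$ acting on $F\cong\Q^2$, such units have irrational conjugates and hence no rational eigenline, so $\Q w_1$ is never $\Gamma$-stable and a suitable $\alpha_0$ always exists; I would accordingly state the corollary with this (automatically satisfied, in context) hypothesis on $\Gamma$. To dispose of the Borel case directly one would, writing $b_i=w_1+\varepsilon_i w_2$, evaluate $\sigma_{Hill}$ on a pair of upper-triangular matrices and check that the restriction to $V$ of the resulting cone is, up to sign, either empty or supported on a cone whose only extremal rays point along $\pm w_1$, so that the vanishing hypothesis for $w_1$ again makes the relevant $\mu_{f',C}$ a measure by Proposition~\ref{P:simple_case}; the sign bookkeeping here (which depends on how $\Gamma$ acts on $\Q w_1$) is the delicate point, and a cleaner alternative would be to use that Hill's two-dimensional cocycle is cohomologous to Solomon's cocycle (\ref{solomon}), which vanishes identically on degenerate pairs, once the connecting $1$-cochain is shown to be measure-valued.
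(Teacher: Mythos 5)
Your argument is the same as the paper's: reduce a degenerate pair to non-degenerate ones via the cocycle relation with an auxiliary $\alpha_0$, apply Theorem~\ref{T:measure_valued} to each non-degenerate term, and note that the $\Z\delta_0$ ambiguity is harmless. The one point where you diverge is the ``main obstacle'' you raise --- the possibility that $\Q w_1$ is $\Gamma$-stable, so that no suitable $\alpha_0$ exists --- and here the paper closes the gap without any added hypothesis or case analysis: it observes that $\Gamma$ is a finite-index subgroup of $\GL(L)\cong\GL_2(\Z)$, and no finite-index subgroup of $\GL_2(\Z)$ can stabilize a line in $V$ (such a subgroup contains nontrivial powers of the two standard unipotents, whose fixed lines differ; equivalently, $\SL_2(\Z)$ acts transitively on primitive vectors and the orbit of $w_1$ under a finite-index subgroup cannot lie on one line). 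So the Borel case simply does not arise for the $\Gamma$ of the construction, your fallback of restating the corollary with an extra hypothesis is unnecessary, and the direct evaluation of $\sigma_{Hill}$ on upper-triangular pairs that you sketch (correctly flagged as delicate) can be avoided entirely. With that observation substituted for your caveat, your proof is the paper's proof.
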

\begin{proof}
Thanks to Theorem \ref{T:measure_valued}, we only have to verify that $\Phi_{f'}(\alpha_1,\alpha_2)$ is a measure in the degenerate case. Since $\Gamma\subset\GL(L)$ is a finite index subgroup, and $\GL(L)\equiv \SL_2(\Z)$ acts transitively on $L\subset V$, we can find $\gamma\in\Gamma$ such that $\gamma w_1$ is not in the line spanned by $\alpha_1 w_1,\alpha_2 w_1$. The cocycle condition tells us
\begin{equation*}
	\Phi_{f'}(\alpha_1,\alpha_2)-\Phi_{f'}(\alpha_1,\gamma)+\Phi_{f'}(\alpha_2,\gamma)\equiv 0\pmod{\Z\delta_0}.
\end{equation*}
Our choice of $\gamma$ implies $\Phi_{f'}(\alpha_1,\gamma)$ and $\Phi_{f'}(\alpha_2,\gamma)$ are measures, again by Theorem \ref{T:measure_valued}. Thus $\Phi_{f'}(\alpha_1,\alpha_2)$ is a measure.
\end{proof}

While this proof does not generalize to higher dimension, we believe that the conclusion should hold. That is, we believe the Shintani cocycle $\Phi_{f'}$ should be \emph{measure-valued} whenever $f'$ satisfies the vanishing hypothesis for $w_1$. However, Hill's cocycle becomes unwieldy in higher dimensional degenerate cases and our methods depend on knowledge of the generators of the cones. Even though we cannot conclude all specializations are $p$-adic measures, all cases of arithmetic interest are non-degenerate and fit within the framework of our results. Of particular interest is the case $V=F$, a totally real field of degree $n$.

\section{$p$-adic $L$-functions}
Let $F$ be a totally real field, $\frak{f}$ is an integral ideal (prime to $p$), $\frak{c}\nmid\frak{f}$ a prime ideal of degree $1$, and $m$ a nonnegative integer. For all fractional ideals $\frak{a}$ prime to $\frak{f}$, define for $s\in\C$
\begin{equation*}
	\zeta^*([\frak{a}]_{\frak{f}p^m},s)=\sum_{\substack{0\neq \frak{b}\subset\cO_F\\ [\frak{b}]_{\frak{f}p^m}=[\frak{a}]_{\frak{f}p^m}\\ (\frak{b},p)=1}}\frac{1}{\Norm(\frak{b})^s}
\end{equation*}
and
\begin{equation*}
	\zeta^*_\frak{c}([\frak{a}]_{\frak{f}p^m},s):=\zeta([\frak{a}]_{\frak{f}p^m},s)-\Norm(\frak{c})^{1-s}\zeta^*([\frak{a}\frak{c}^{-1}]_{\frak{f}p^m},s).
\end{equation*}
Note that if $m=0$, $\zeta^*([\frak{a}]_{\frak{f}p^m},s)=\zeta([\frak{a}]_{\frak{f}p^m},s)$. By Cebotarev, we may assume, without loss of generality, that $\frak{a}$ is relatively prime to $p$ and $\frak{c}$.

Let $\cX$ denote weight space, the rigid analytic variety $\cX:=\Hom_{cts}(\Z_p^\times,\mathbb{G}_m)$. We embed $\Z\hookrightarrow \cX(\Q_p)$ by $k\mapsto (t\mapsto t^k)$ (note that we do not project $t$ to $1+p\Z_p$). For arbitrary elements $s\in\cX(\C_p)$, $t\in\Z_p^\times$, we will write $t^{s}$ for the image $s(t)$.
\begin{thm}[Deligne-Ribet, Cassou Nogu\`es, Barsky]\label{T:p-adic_L-functions}
There exists a $p$-adic analytic function $\zeta_{\ell,p}([\frak{a}]_{\frak f p^m},s)$, $s\in\cX(\C_p)$, such that
\begin{equation*}
	\zeta_{\frak{c},p}([\frak a]_{\frak f p^m},-k)=\zeta_\frak{c}^*([\frak{a}]_{\frak fp^m },-k)
\end{equation*}
for all integers $k\geq 0$.
\end{thm}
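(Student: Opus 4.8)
\textbf{Proof plan for Theorem \ref{T:p-adic_L-functions}.}

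The plan is to realize the smoothed ray class zeta function $\zeta^*_{\frak c}([\frak a]_{\frak f p^m},s)$ as a moment of a genuine $p$-adic measure obtained by specializing the Shintani cocycle $\Phi_{f'}$ of Theorem \ref{T:measure_valued}, and then to define $\zeta_{\frak c,p}$ as the Mellin transform of that measure. First I would set up the arithmetic input. Write $V=F$ with its distinguished lattice $L=\cO_F$, and embed $F\hookrightarrow\R^n$ via the real places as in \S3.1. Choosing a basis $w_1,\ldots,w_n$ of $F$ with $w_1=1$ (the crucial point), the norm form $N$ on $\R^n$ restricts to the field norm on $F$, and multiplication by a totally positive unit $\varepsilon$ acts $\Q$-linearly on $F$, preserving $\cO_F$; so a finite-index subgroup of totally positive units $E=E(\frak f p^m)$ (or rather $E(\frak f)$, acting through the appropriate congruence subgroup) sits inside $\GL(V)$. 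Using \eqref{E:ray_is_shintani} and a Shintani domain $C$ for $E$, together with the degree-one prime $\frak c$ producing the smoothing factor $-\Norm(\frak c)^{1-s}$, the smoothed zeta value $\zeta^*_{\frak c}([\frak a]_{\frak f p^m},-k)$ gets written as a finite $\Z$-linear combination of Shintani zeta values $\zeta_{Sh}(f'\otimes[L_p],C_i;-k)$, where the test function $f'\in\cS(V^{(p)})$ encodes the congruence conditions mod $\frak f$ at places away from $p$, the component at $p$ being $[L_p]=[\cO_F\otimes\Z_p]$, and where the $\frak c$-smoothing is exactly what kills the pole at $s=1$.

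Next I would verify the \textbf{Vanishing Hypothesis for} $w_1=1$ for this $f'$. This is the key arithmetic check: the $\frak c$-smoothing $\zeta - \Norm(\frak c)^{1-s}\zeta^*(\cdot\frak c^{-1})$ is designed precisely so that, at the finite place $\frak c$ (and hence as a statement about $f'$ away from $p$), the corresponding difference of test functions has vanishing Haar average along every line $v+\Q w_1 = v + F\cdot 1$; concretely $h^{(p)}(\pi_{v,1}f')=0$ for all $v\in F$ because averaging the characteristic function of the congruence class mod $\frak f$ against the $\frak c$-twist produces $1-\Norm\frak c\cdot\tfrac1{\Norm\frak c}=0$ on the relevant coset. (This is the smoothing identity underlying Deligne--Ribet; I would phrase it as a short lemma about test functions on $F_{\frak c}$.) Granting this, Theorem \ref{T:measure_valued} applies: for each non-degenerate $(\alpha_1,\ldots,\alpha_n)\in\Gamma^n$, $\Phi_{f'}(\alpha_1,\ldots,\alpha_n)$ is an honest measure on $L_p$. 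I would then pick $(\alpha_1,\ldots,\alpha_n)$ — built from generators of $E$ — so that $\sigma_{Hill}(\alpha_1,\ldots,\alpha_n)$ is, up to sign, the characteristic function of a cone $C$ lying between $C^o(w_1,\ldots,w_1)$-type cones and filling out a Shintani domain for $E$; non-degeneracy holds because the real embeddings make the unit action hyperbolic. Summing the resulting measures over the ray classes / over a fundamental domain yields a single measure $\mu=\mu_{\frak a,\frak f p^m,\frak c}\in\cM(L_p)$ supported on $L_p^\times$-type locus, whose existence does not depend on $m$ (the dependence on $p^m$ is absorbed by pushing forward under the projection $L_p\to (\cO_F/p^m)^\times$, or by restricting to the appropriate open).

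Finally I would define $\zeta_{\frak c,p}([\frak a]_{\frak f p^m},s) := (\Norm\frak a)\int_{L_p} N(v)^{s}\,d\mu(v)$, or more precisely the Mellin transform $s\mapsto \int_{\cO_{F,p}^\times} \langle N(v)\rangle^{s}\,d\mu(v)$ interpreted via the embedding $\Z\hookrightarrow\cX(\Q_p)$ of the excerpt; since $\mu$ is a measure and $v\mapsto N(v)$ is a unit on the support, $N(v)^s$ is a rigid analytic family of continuous functions in $s\in\cX(\C_p)$, so the integral is rigid analytic in $s$. Evaluating at $s=-k$ and invoking Proposition \ref{P:zeta_values} (which identifies $\int N^k\,d\mu_{C,f'}$ with $\zeta_{Sh}(f'\otimes[L_p],C;-k)$) term by term over the decomposition set up in step one recovers $\sum_i \pm\zeta_{Sh}(f'\otimes[L_p],C_i;-k)=\zeta^*_{\frak c}([\frak a]_{\frak f p^m},-k)$, which is the claimed interpolation. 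I expect the \textbf{main obstacle} to be the bookkeeping of step one: matching Hill's combinatorially-defined cone $\sigma_{Hill}(\alpha_1,\ldots,\alpha_n)$ (with its $\varepsilon$-perturbed boundary conventions) against an \emph{actual} Shintani fundamental domain for the unit group, so that the boundary contributions telescope correctly across the sum over a set of generators of $E$ and the sum over ray classes — i.e., checking that the cocycle relation for $\Phi_{f'}$ exactly reproduces the inclusion–exclusion inherent in Shintani's decomposition, with no leftover lower-dimensional cones spoiling the measure-valuedness. The $\frak c$-smoothing and the vanishing hypothesis are what make this go through, but verifying the compatibility of signs and boundaries is the delicate part; everything else (analyticity in $s$, the interpolation at negative integers) is then formal given the earlier propositions.
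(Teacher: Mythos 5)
Your plan follows essentially the same route as the paper: the same test function $f'$ with the $\frak{c}$-smoothing at the degree-one prime, the same verification of the vanishing hypothesis for $w_1=1$ via the $1-\ell\cdot\tfrac{1}{\ell}=0$ computation at the place $\ell$, application of Theorem \ref{T:measure_valued} to a non-degenerate cycle built from fundamental units of $E(\frak{f}\frak{c})$, and the definition of $\zeta_{\frak{c},p}$ as a Mellin transform with interpolation supplied by Proposition \ref{P:zeta_values} and Equation \eqref{E:ray_is_shintani}. The ``main obstacle'' you correctly flag --- identifying $\sigma_{Hill}$ evaluated on the unit cycle with a genuine Shintani domain, and the non-degeneracy of that cycle --- is exactly what the paper outsources to Spiess (Proposition \ref{P:is_domain}) and to Colmez's Lemme 2.2, so your plan is sound as stated.
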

The theorem will follow by taking $V=F$ and considering the Schwartz function
\begin{equation}
	f'=\bigotimes_{q\nmid p\ell}[1+\frak{a}^{-1}\frak{f}\cO_{F,{q}}]\bigotimes\left([\cO_{F,{\ell}}]-\ell[\frak{c}\cO_{F,\ell}]\right).
\end{equation}
Let us write $\cO_{F,p}$ for the lattice $\cO_F\otimes_\Z\Z_p\subset F\otimes_\Q\Q_p$. Note that $f'\otimes [1+p^m\cO_{F,p}]=[1+\frak{a}^{-1}\frak{f}p^m\cO_F]-\ell[c+\frak{a}^{-1}\frak{f}p^m\frak{c}\cO_F]$, where $c\in\frak{c}$ is prime to $\frak{f}$ and is $\equiv 1\pmod{\frak{f}}$. In what follows, it will be convenient to take $c\in\Q$. First, we verify the vanishing hypothesis for $f'$:

\begin{lem}\label{L:hypothesis_check}
The test function $f'$ satisfies the vanishing hypothesis for $w_1=1$.
\end{lem}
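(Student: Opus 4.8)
The plan is to unwind the definitions and reduce the claim to the statement that the test function $[\cO_{F,\ell}] - \ell[\frak{c}\cO_{F,\ell}]$ has total mass zero at the place $\ell$, which is a direct computation. Recall that $w_1 = 1 \in F = V$, so I must show $h^{(p)}(\pi_{v,1} f') = 0$ for every $v \in F$. Since $f'$ is factorizable, $f' = \bigotimes_{q \nmid p\ell}[1+\frak{a}^{-1}\frak{f}\cO_{F,q}] \otimes ([\cO_{F,\ell}] - \ell[\frak{c}\cO_{F,\ell}])$, and by the product formula \eqref{H2} for the Haar measure (and the remark defining $h^{(p)}$), it suffices to analyze the map $\pi_{v,1}$ place by place. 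The key point is that the direction $w_1 = 1$ is the image of $1 \in \Q$ under the embedding $\Q \hookrightarrow F$, so $\pi_{v,1}$ sends a test function on $F_q = F \otimes_\Q \Q_q = \prod_{\frak{q} \mid q} F_\frak{q}$ to a test function on $\Q_q$ via $x \mapsto f_q(v + x)$, where $x \in \Q_q$ embeds diagonally.

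The main step is the computation at $\ell$. I would compute $h_\ell\big(\pi_{v,1}([\cO_{F,\ell}] - \ell[\frak{c}\cO_{F,\ell}])\big)$ and show it vanishes for all $v$. Write $g_\ell = [\cO_{F,\ell}] - \ell[\frak{c}\cO_{F,\ell}]$; then $\pi_{v,1}g_\ell$ is the function $x \mapsto [\cO_{F,\ell}](v+x) - \ell[\frak{c}\cO_{F,\ell}](v+x)$ on $\Q_\ell$. The set of $x \in \Q_\ell$ with $v + x \in \cO_{F,\ell}$ is either empty or a coset of $\Q_\ell \cap \cO_{F,\ell} = \Z_\ell$ (using that $\frak{c}$ has degree $1$ and is prime to $\frak{f}$, hence prime to $\ell$ in the relevant sense, and that $\ell$ is unramified/split appropriately in the data); similarly the set with $v + x \in \frak{c}\cO_{F,\ell}$ is empty or a coset of $\Q_\ell \cap \frak{c}\cO_{F,\ell}$. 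Because $\frak{c}$ is a degree-$1$ prime above $\ell$, one has $\Q_\ell \cap \frak{c}\cO_{F,\ell} = \ell\Z_\ell$ (the contraction of the degree-one prime to $\Q$ is $\ell\Z_\ell$), so $h_\ell$ of the second indicator is $\frac{1}{\ell}$ times $h_\ell$ of the first when both cosets are nonempty, and the $\ell$ in front of $[\frak{c}\cO_{F,\ell}]$ exactly cancels this factor. One must also check the compatibility of the ``nonempty'' conditions: $v + x \in \cO_{F,\ell}$ for some $x$ iff $v \in \cO_{F,\ell} + \Q_\ell$, and one checks the $\frak{c}$-analogue holds precisely in the same cases (again using degree one), so the two contributions cancel identically; when both are empty the mass is trivially $0$. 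Hence $h_\ell(\pi_{v,1}g_\ell) = 0$ for all $v$.

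Finally, assemble: $h^{(p)}(\pi_{v,1}f') = \Big(\prod_{q \nmid p\ell} h_q\big(\pi_{v,1}[1+\frak{a}^{-1}\frak{f}\cO_{F,q}]\big)\Big) \cdot h_\ell(\pi_{v,1}g_\ell) = 0$, since the last factor vanishes. (One should note the other factors are finite — indeed almost all equal $1$ — so the product makes sense; this is where one uses that $f'$ agrees with $[\cO_{F,q}]$ for almost all $q$.) I expect the main obstacle to be the bookkeeping at $\ell$: correctly identifying $\Q_\ell \cap \frak{c}\cO_{F,\ell} = \ell\Z_\ell$ from the degree-one hypothesis on $\frak{c}$, and verifying that the emptiness conditions for the two indicator functions match up so that the $\ell$-factor cancellation is exact rather than merely generic. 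Everything else is a routine unwinding of the factorization and the product formula for Haar measure.
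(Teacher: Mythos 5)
Your proposal is correct and follows essentially the same route as the paper: factor $f'$ over the places, reduce via the product formula to the component at $\ell$, and check directly that $[\cO_{F,\ell}]-\ell[\frak{c}\cO_{F,\ell}]$ has vanishing average along every line in the direction $1$. The only cosmetic difference is that the paper chooses coordinates $\cO_{F,\ell}\cong\Z_\ell^n$ (assuming $\ell$ split) to make the computation explicit, whereas you argue coordinate-freely from $\Q_\ell\cap\frak{c}\cO_{F,\ell}=\ell\Z_\ell$ and the surjectivity of $\Z_\ell\to\cO_{F,\ell}/\frak{c}\cO_{F,\ell}$ coming from the degree-one hypothesis --- in fact your treatment of the matching of the nonemptiness conditions is slightly more careful than the paper's.
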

\begin{proof}
Fix $\alpha\in F$. The projection $	\pi_{\alpha,1} f'\in\cS(\Q^{(p)})$ factors as
\begin{equation}
	\pi_{\alpha,1} f' = \bigotimes_{q\nmid p\ell )}\pi_{\alpha,1}[1+\frak{a}^{-1}\frak{f}\cO_{F,q}](\alpha+x)\bigotimes\pi_{\alpha,1}\left([\cO_{F,\ell}]-\ell[\frak{c}\cO_{F,\ell}]\right),
\end{equation}
and so it suffices to show $h_\ell(\pi_{\alpha,1}[\cO_{F,\ell}]-\pi_{\alpha,1}\ell[\frak{c}\cO_{F,\ell}])=0$. Since $\ell$ splits completely in $F$, we may choose coordinates identifying $\cO_{F,\ell}$ with $\Z_\ell^n$, and $\frak{c}\cO_{F,\ell}$ with $\ell\Z_\ell\times\Z_\ell^{n-1}$. Then, if $\alpha\in \cO_{F,\ell}$, 
\begin{equation}
	\pi_{\alpha,1}([\Z_\ell^n]-\ell[\ell\Z_\ell\times\Z_\ell^{n-1}])=[-a+\Z_\ell]-\ell[-a +\ell\Z_\ell]
\end{equation}
where $\alpha\equiv a\pmod{\frak{c}}$, which clearly has Haar measure $0$. If $\alpha\not\in \cO_{F,\ell}$, then the projection is $0$, which also has Haar measure $0$. Thus $f'$ satisfies the vanishing hypothesis for $1$.
\end{proof}

Since $E(\frak f \frak c)\subset\Gamma$, pairing our cocycle with non-degenerate elements of $H_{n-1}(E(\frak f\frak{c}),\Z)$ gives us measures, and by picking out the right units we can recover zeta values as moments of our measure. The exact element we need to pair our cocycle is provided by Lemme 2.2 of \cite{Col87}, but it is not a priori clear that this will give us the correct zeta values. The problem is that Hill's cocycle, a priori, does not evaluate to Shintani domains when the degree of the field is greater than $2$. However, Spiess has shown that Hill's construction does indeed recover Shintani domains:

\begin{prop}[Spiess]\label{P:is_domain}
Let $\eta \in \Z[E(\frak f\frak{c})^n]$ be a generator of $H_{n-1}(E(\frak f\frak{c}),\Z)\cong \Z$. Then the cone function $\sigma_{Hill}(\eta)$ is $\pm$ the characteristic function of a Shintani domain.
\end{prop}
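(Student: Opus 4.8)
The plan is to reduce the statement to a comparison between two cone functions that are both characteristic functions of fundamental domains for the same group action, then argue they must agree up to the ``boundary'' ambiguity that is harmless after quotienting by constants. First I would recall the two inputs. On one side, Shintani's Proposition (Proposition 4 of \cite{Shi76}) furnishes a Shintani domain $D$ for $E=E(\frak f\frak c)$, i.e.\ a disjoint union of simplicial cones with $\R^n_+=\coprod_{\varepsilon\in E}\varepsilon D$; its characteristic function $[D]$ satisfies the fundamental-domain identity $\sum_{\varepsilon\in E}\varepsilon\cdot[D]=[\R^n_+]$ (a constant function modulo the $\GL(V)$-action on $\cK_V$, recalling the sign twist in (\ref{coneaction}) is trivial here since $E$ consists of totally positive units, hence has positive determinant). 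On the other side, $\sigma_{Hill}(\eta)\in\cK_V$ with $\eta$ a generator of $H_{n-1}(E,\Z)\cong\Z$; since $\sigma_{Hill}$ is an $(n-1)$-cocycle for $\GL(V)$ modulo constants and $E$ acts on it, the class $[\sigma_{Hill}]\in H^{n-1}(E,\cL_V)$ paired with $\eta$ gives a well-defined element of $\cL_V=\cK_V/\Z$, and by the cocycle property this element is $E$-invariant in the appropriate sense. The key structural input is that $H_{n-1}(E,\Z)\cong\Z$ because $E\cong\Z^{n-1}$ is free abelian of rank $n-1$ (Dirichlet), so $\eta$ is literally the fundamental class of the $(n-1)$-torus $B E$.

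The heart of the argument is to show that evaluating the Hill cocycle on this fundamental class produces a cone function whose $E$-translates tile $\R^n_+$ (equivalently $(V_\R)_+$). I would do this by the standard homological computation: write $\eta$ explicitly as an alternating sum $\eta=\sum_{\tau}\sign(\tau)[\,\varepsilon_{\tau(1)},\varepsilon_{\tau(1)}\varepsilon_{\tau(2)},\ldots,\varepsilon_{\tau(1)}\cdots\varepsilon_{\tau(n-1)},1\,]$ over permutations (the Lemme 2.2 of \cite{Col87} cited in the text gives precisely such a generator for the relevant subgroup), and use Theorem \ref{T:Hill_description}: each non-degenerate tuple $(\alpha_1,\ldots,\alpha_n)$ contributes $\pm[C]$ for a simplicial cone $C$ with $C^o(\alpha_1w_1,\ldots,\alpha_nw_1)\subset C\subset C(\alpha_1w_1,\ldots,\alpha_nw_1)$. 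Summing, the interiors of the contributing cones are exactly the top-dimensional simplicial pieces of a fan subdividing the cone on the orbit $\{1,\varepsilon_1,\varepsilon_1\varepsilon_2,\ldots\}\cdot w_1$, and the signs are arranged by the cocycle condition so that lower-dimensional faces cancel. This is the combinatorial core: one checks that $\sum_{\varepsilon\in E}\varepsilon\cdot\sigma_{Hill}(\eta)$ is a constant function, which forces $\sigma_{Hill}(\eta)$ (as a genuine element of $\cK_V$, after fixing the constant-function representative by looking at a generic ray) to be $\pm$ the characteristic function of a set $\Delta$ with $\coprod_{\varepsilon\in E}\varepsilon\Delta=(V_\R)_+$ up to a measure-zero boundary.

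Finally I would upgrade ``up to boundary'' to ``is an honest Shintani domain.'' Because $\sigma_{Hill}(\eta)$ is, on the nose, a $\Z$-linear combination of characteristic functions of rational open cones (Theorem \ref{T:Hill_theorem_1}), and because its $E$-translates sum to the constant function $1$ on $(V_\R)_+\setminus\{0\}$ (no over- or under-counting on any ray, not even boundary rays), the support $\Delta$ with multiplicity one is a disjoint union of open simplicial cones satisfying $\varepsilon\Delta\cap\Delta=\emptyset$ for all $1\ne\varepsilon\in E$ and $\coprod_{\varepsilon}\varepsilon\Delta=(V_\R)_+$ — which is exactly the definition of a Shintani domain. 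The sign $\pm$ is $\sign\det$ of the relevant deformed basis, which is constant on the orbit. The main obstacle I anticipate is the bookkeeping in the middle step: verifying that the alternating sum of Hill cones over the explicit generator $\eta$ has \emph{exactly} multiplicity one on every ray of $(V_\R)_+$, including the boundary rays where Hill's $\varepsilon$-deformation is doing subtle work — this is where one genuinely needs Hill's ordered-field trick rather than a naive cone decomposition, and it is the reason the statement is attributed to Spiess; I would cite \cite{Spi12}, Lemma 3.5 and the surrounding discussion, for the detailed verification that the deformation resolves the degenerate faces correctly.
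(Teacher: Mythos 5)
Your proposal is consistent with the paper, whose entire proof of this proposition is the citation ``This is Proposition 3.7 of \cite{Spi12}''; your sketch is a reasonable reconstruction of what that reference does, and, like the paper, you ultimately delegate the one genuinely hard step --- that the alternating sum of Hill's deformed cones over the explicit generator $\eta$ has multiplicity exactly one on \emph{every} ray of $(V_\R)_+$, boundary rays included, so that the resulting cone function is actually $\{0,1\}$-valued rather than merely having $E$-translates summing to $1$ --- to Spiess. Since both arguments rest on that same external verification, your route is essentially the paper's.
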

\begin{proof}
This is Proposition 3.7 of \cite{Spi12}. 
\end{proof}

\begin{prop}
The pairing $\Phi_{f'}\cap\eta$ is a measure.
\end{prop}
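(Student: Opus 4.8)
The plan is to combine Proposition \ref{P:is_domain} with Theorem \ref{T:measure_valued}, after checking the (easy) fact that the element $\eta$ can be represented by a non-degenerate chain. First I would recall that $\eta\in\Z[E(\frak f\frak c)^n]$ is a sum of tuples $(\alpha_1,\ldots,\alpha_n)$ with $\alpha_i\in E(\frak f\frak c)\subset\Gamma$, so that $\Phi_{f'}\cap\eta$ makes sense as an element of $\widetilde\cM(L_p)/\Z\delta_0$ by $\Gamma$-equivariance of the cocycle. Since each $\alpha_i$ acts on $V_\R$ through a totally positive element of $\cO_F^\times$, and totally positive units move the distinguished vector $w_1$ to vectors in the positive orthant that are in sufficiently general position, one checks that (after, if necessary, replacing $\eta$ by a homologous cycle using the fact that $H_{n-1}(E(\frak f\frak c),\Z)\cong\Z$ and that $E(\frak f\frak c)$ acts freely on a suitable contractible space) every tuple appearing in $\eta$ is non-degenerate in the sense of the Definition preceding Lemma \ref{L:Hill_lemma_1}; this is exactly what is used in Spiess's proof of Proposition \ref{P:is_domain}, so I would simply cite that.

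With non-degeneracy in hand, Lemma \ref{L:hypothesis_check} says $f'$ satisfies the vanishing hypothesis for $w_1=1$, so the hypothesis of Theorem \ref{T:measure_valued} is met for every tuple in $\eta$. Hence $\Phi_{f'}(\alpha_1,\ldots,\alpha_n)$ is a measure on $L_p$ for each such tuple, and since a finite $\Z$-linear combination of measures is a measure, $\Phi_{f'}\cap\eta\in\cM(L_p)/\Z\delta_0$. Actually it is cleaner to bypass the tuple-by-tuple argument entirely: by Proposition \ref{P:is_domain}, $\sigma_{Hill}(\eta)$ equals $\pm$ the characteristic function of a Shintani domain $\cD$ for $E(\frak f\frak c)$, i.e.\ $\pm$ a finite $\Z$-linear combination of characteristic functions of rational pointed open cones each of whose extremal rays lies in the $E(\frak f\frak c)$-orbit (hence the $\Gamma$-orbit) of $w_1$. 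Applying the homomorphism $\cK_V\to\widetilde\cM(L_p)$ and Lemma \ref{L:gamma_action} (the vanishing hypothesis for $w_1$ propagates to every $\gamma w_1$, $\gamma\in\Gamma$), the Measure Criterion Theorem shows each $\mu_{f',C}$ occurring is a measure, so $\Phi_{f'}\cap\eta=\pm\mu_{f',\cD}$ is a measure.

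The only genuine subtlety — and the step I expect to be the main obstacle — is the interaction between the extremal rays of the cones making up a Shintani domain and the orbit $\Gamma w_1$: a priori a Shintani domain produced by Hill's construction could have extremal rays not of the form $\gamma w_1$, and then Theorem \ref{T:measure_valued} would not directly apply. This is precisely the content of Spiess's analysis (Proposition 3.7 of \cite{Spi12}): the generators $\alpha_i b_i$ entering Hill's cone function are deformations of $\alpha_i w_1$, and the cone $\sigma_{Hill}(\alpha_1,\ldots,\alpha_n)$ satisfies $C^o(\alpha_1 w_1,\ldots,\alpha_n w_1)\subset C\subset C(\alpha_1 w_1,\ldots,\alpha_n w_1)$ by Theorem \ref{T:Hill_description}, so every extremal ray of $C$ is spanned by some $\alpha_i w_1$ with $\alpha_i\in E(\frak f\frak c)\subset\Gamma$. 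Thus all extremal rays lie in $\Gamma w_1$, and the hypothesis of the Measure Criterion is satisfied. I would state this carefully, cite Theorem \ref{T:Hill_description} and Proposition \ref{P:is_domain}, and conclude.
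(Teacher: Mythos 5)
Your proposal is correct and follows essentially the same route as the paper: show every tuple occurring in $\eta$ is non-degenerate, then apply Lemma \ref{L:hypothesis_check} and Theorem \ref{T:measure_valued} and sum the resulting measures. The only difference is that where you defer non-degeneracy to Spiess's Proposition 3.7, the paper pins it down directly by writing $\eta=\pm\sum_{\tau\in S_{n-1}}\sign(\tau)[\varepsilon_{\tau(1)}|\cdots|\varepsilon_{\tau(n-1)}]$ (Remark 2.1(c) of \cite{Spi12}) and citing Lemma 2.1 of \cite{Col87} for the non-degeneracy of each such tuple.
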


\begin{proof}
Let $\varepsilon_1,\ldots,\varepsilon_{n-1}$ be fundamental units of $E(\frak{f}\frak{c})$. From Remark 2.1(c) of \cite{Spi12},  $\eta=\pm\sum_{\tau\in S_{n-1}} \sign(\tau) [\varepsilon_{\tau(1)}|\cdots|\varepsilon_{\tau(n-1)}]$, where $[\varepsilon_{\tau(1)}|\ldots|\varepsilon_{\tau(n-1)}]$ represents the cyclce $(1,\varepsilon_{\tau(1)},\varepsilon_{\tau(1)}\varepsilon_{\tau(2)},\ldots,\varepsilon_{\tau(1)}\cdots\varepsilon_{\tau(n-1)})\in\Z[\Gamma^n]$. By Lemma 2.1 of \cite{Col87}, this is non-degenerate. Using our Lemma \ref{L:hypothesis_check} and Theorem \ref{T:measure_valued}, we deduce that $\Phi_{f'}\cap\eta$ is a measure.
\end{proof}
Now we are ready to prove Theorem \ref{T:p-adic_L-functions}.
\begin{proof}
Fix $k\geq 0$ an integer, and let $\kappa=\sigma_{Hill}(\eta)$. By Proposition \ref{P:is_domain}, $\sigma_{Hill}(\eta)$ is $\pm$ the characteristic function of a Shintani domain for $E(\frak{f})$. In particular, $\kappa$ is supported on the positive orthant $\R_+^n$. Let $\mu$ be the measure $\mu=\pm\Phi_{f'}(\eta)$, where the sign is the sign of $\kappa$. By Proposition \ref{P:zeta_values}, the moments of $\mu$ are given by
\begin{align*}
	\int_{1+p^m\cO_{F,p}} \Norm(\alpha)^k d\mu(\alpha)=\zeta_{SH}(f'\otimes [1+p^m\cO_{F,p}],\kappa;-k)
\end{align*}
and
\begin{align*}
	\int_{\Norm^{-1}(\Z_p^\times)} \Norm(\alpha)^k d\mu(\alpha)=\zeta_{SH}(f'\otimes [\Norm^{-1}(\Z_p^\times)],\kappa;-k).
\end{align*}
By Equation \ref{E:ray_is_shintani},
\begin{equation}\label{E:interpolate1}
	\int_{1+p^m\cO_{F,p}} \Norm(\alpha)^k d\mu(\alpha)=\Norm(\frak{a})^k \zeta_\frak{c}([\frak{a}]_{\frak{f}p^m},-k)
\end{equation}
and
\begin{equation}\label{E:interpolate2}
	\int_{\Norm^{-1}(\Z_p^\times)} \Norm(\alpha)^k d\mu(\alpha)=\Norm(\frak{a})^k \zeta_\frak{c}^*([\frak{a}]_{\frak{f}},-k)
\end{equation}
If $m>0$, we define $\zeta_{\frak{c},p}([\frak{a}]_{\frak{f}p^m},s)$ to be the analytic function
\begin{equation}
	\zeta_{\frak{c},p}([\frak{a}]_{\frak{f}p^m},s):=\Norm(\frak{a})^{s}\int_{1+p^m\cO_{F,p}} N(\alpha)^{-s} d\mu(\alpha),
\end{equation}
where $\Norm(\alpha)^{-s}:=s(\Norm(\alpha)^{-1})$. If $m=0$,
\begin{equation}
	\zeta_{\frak{c},p}([\frak{a}]_{\frak{f}},s):=\Norm(\frak{a})^{s}\int_{\Norm^{-1}(\Z_p^\times)} N(\alpha)^{-s} d\mu(\alpha).
\end{equation}
By equations \ref{E:interpolate1} and \ref{E:interpolate2}, $\zeta_{\frak{c},p}$ has the correct interpolation property.
\end{proof}

%\bibliographystyle{amsplain}
%\bibliography{thesisbib}
\providecommand{\bysame}{\leavevmode\hbox to3em{\hrulefill}\thinspace}
\providecommand{\MR}{\relax\ifhmode\unskip\space\fi MR }
% \MRhref is called by the amsart/book/proc definition of \MR.
\providecommand{\MRhref}[2]{%
  \href{http://www.ams.org/mathscinet-getitem?mr=#1}{#2}
}
\providecommand{\href}[2]{#2}

\end{document}